\numberwithin{equation}{section}
\newtheoremstyle{fancy1}{10pt}{10pt}{\itshape}{12pt}{\textsc\bgroup}{.\egroup}{8pt}{
}
\newtheoremstyle{fancy2}{10pt}{10pt}{}{12pt}{\itshape}{.}{8pt}{ }
\theoremstyle{fancy1}
\newtheorem{lem}[equation]{Lemma}
\newtheorem{prop}[equation]{Proposition}
\newtheorem*{thm*}{Theorem}
\newtheorem{main}{Theorem}
\newtheorem*{main*}{Theorem}
\newtheorem*{cor*}{Corollary}
\newtheorem*{prop*}{Proposition}
\newtheorem*{problem*}{Problem}
\theoremstyle{fancy2}
\newtheorem{rem}[equation]{Remark}
\newtheorem*{rems*}{Remarks}
\newtheorem*{rem*}{Remark}
\newtheorem*{example*}{Example}
\newcommand{\cref}[1]{Corollary~\ref{#1}}
\newcommand{\pref}[1]{Proposition~\ref{#1}}
\newcommand{\e}{\epsilon}
\newcommand{\Sph}{\mathbb{S}}
\newcommand{\Disc}{\mathbb{D}}
\newcommand{\C}{{\mathbb{C}}}
\newcommand{\R}{{\mathbb{R}}}
\newcommand{\QH}{{\mathbb{H}}}
\newcommand{\fg}{{\mathfrak{g}}}
\newcommand{\fk}{{\mathfrak{k}}}
\newcommand{\fh}{{\mathfrak{h}}}
\newcommand{\fm}{{\mathfrak{m}}}
\newcommand{\fn}{{\mathfrak{n}}}
\newcommand{\fp}{{\mathfrak{p}}}
\newcommand{\pro}[2]{\langle #1 , #2 \rangle}
\def\con#1=#2(#3){#1 \equiv #2 \bmod{#3}}
\newcommand{\ml}{\langle}                     % Riemannian metric (left )
\newcommand{\mr}{\rangle}                    % Riemannian metric (right)
\newcommand{\tr}{\ensuremath{\operatorname{tr}}}
\newcommand{\diag}{\ensuremath{\operatorname{diag}}}
\renewcommand{\Im}{\ensuremath{\operatorname{Im}}}
\newcommand{\Ad}{\ensuremath{\operatorname{Ad}}}
\newcommand{\Ric}{\ensuremath{\operatorname{Ric}}}
\DeclareMathOperator{\trace}{trace}
\DeclareMathOperator{\Id}{Id}
\DeclareMathOperator{\Hess}{Hess}
\DeclareMathOperator{\Scal}{Scal}
\DeclareMathOperator{\Div}{div}
\newcommand{\bi}{\bar{i}}
\newcommand{\oi}{\overline{i}}
\newcommand{\oj}{\overline{j}}
\begin{document}
	%\date{\today}

		\title{Initial value problems on cohomogeneity one manifolds, I}

		\author{Luigi Verdiani}
		\address{University of Firenze}
		\email{luigi.verdiani@unifi.it}
		\author{Wolfgang Ziller}
		\address{University of Pennsylvania}
		\email{wziller@math.upenn.edu}
		\thanks{ The first named author was supported by the PRIN 2022 project
"Real and Complex Manifolds: Geometry and holomorphic dynamics" (2022AP8HZ9) and the second named author was supported by an ROG grant from the University of Pennsylvania. Both authors would like to thank  the Max Planck Institute in Bonn for their support.}

\begin{abstract}
	We study  initial value problems for various geometric equations  on a cohomogeneity  manifold near a singular orbit. We show that  when prescribing the Ricci curvature, or finding solutions to the Einstein and soliton equations,  there exist solutions near the singular orbit, unique up to a finite number of constants. In part I we make a special assumption that significantly simplifies the proof, and will solve the general case in Part II.
	%	We show that there exists a formal power series which by a theorem of Malgrange converges.
\end{abstract}

		\maketitle

\smallskip

A well known  problem is that of prescribing the Ricci curvature of a metric, i.e., given a symmetric
bi-linear tensor T, solve the differential equation $\Ric(g) = T$ (or on a compact manifold $\Ric(g) =
cT$ for some constant $c$) for a metric $g$.  There exists a local
solution if $T$ is non-degenerate or has constant rank \cite{De,DG1,DG2,DY}, but  in some cases  solutions do not exist. The problem has been studied in the case of cohomogeneity one actions in \cite{DK,CD1,CD2,Pu1,Pu2,BK}. In the homogeneous case, where it is
an algebraic problem, see the survey \cite{ BP}, and \cite{PZ1,PZ2} for more recent results.  

\smallskip

For a cohomogeneity one manifold, a PDE becomes an ODE and it is hence easier to find examples, explicit or non-explicit, as well as classifications, in various geometric problems. Just to mention a few, see \cite{EW,DW1,DW2,PS1,PS2,BS,Bo,FH,Ch2,Wi,VZ2}.
In such results it is often important to first understand the initial value problem near a singular orbit, which is already non-trivial in general. 
  We will describe a method which can be used to solve such IVP's in many geometric problems. We will assume throughout the paper, that a singular orbit exists since otherwise the initial value problem becomes a regular ODE.

\smallskip

Before stating our result, let us recall the structure of a cohomogeneity one manifold near a collapsing orbit.  One starts with three Lie groups   $H\subset K\subset G$  with $K/H=\Sph^\ell$, $ \ell>0$. The action of $K$ on $\Sph^\ell$ extends to a linear action on $\Disc=\Disc^{{\ell}+1} \subset \mathbb{R}^{\ell+1}$ and thus $M=G\times_K \Disc$ is a homogeneous disc bundle with boundary $G\times_K\partial\, \Disc=G\times_KK/H=G/H$ a principal orbit.     
 By the slice theorem, any cohomogeneity one manifold has this structure near a singular orbit. 

\smallskip To be more explicit about the metric, define a splitting of the Lie algebras $\fg=\fh\oplus\fp\oplus\fm$, orthogonal with respect to a background metric $Q$ on $\fg$, where $\fh\oplus\fp$ is the Lie algebra $\fk$. In  Part I of this paper we assume that:
\begin{equation*}
	(*)\quad \text{ There are no $\Ad_H$ irreducible representations        in  }    \fm              \text{ which are equivalent  to one in } \fp.
\end{equation*}
This assumption guarantees that $\fp$ and $\fm$ are orthogonal with respect to any $\Ad_H$ invariant bi-linear form, in particular the metric, the tensor $T$ and the Ricci tensor. This will significantly simplify the proofs and we will remove this assumption in Part II. Notice that since the equation is second order, we expect to get a solution after choosing the metric and the second fundamental form of the singular orbit arbitrarily.
\begin{main}
	Let  $M=G\times_K \Disc$ be a homogeneous disc bundle, describing the cohomogeneity one manifold near the singular orbit $G/K$ and satisfying condition (*). Let $T$ be a smooth symmetric bi-linear form on $M$.  Given the metric and the second fundamental form of the singular orbit, there exists a smooth metric $g$ in a neighborhood of $G/K$ with $\Ric(g)=T$, when restricted to the principal orbits. The solution is unique, up to finitely many arbitrarily chosen  constants. 
\end{main}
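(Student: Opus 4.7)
The plan is to reduce $\Ric(g) = T$ to an ODE system, analyze its singular behavior at $t=0$, and construct the solution by a Frobenius-type argument combined with a fixed-point theorem.

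First, use the slice theorem to identify a neighborhood of $G/K$ with $G \times_K \Disc$ and introduce the normal geodesic coordinate $t \in [0, \epsilon)$; on the open principal stratum the metric is $g = dt^2 + g_t$, where $g_t$ is a smooth one-parameter family of $G$-invariant metrics on $G/H$, each determined by an $\Ad_H$-invariant form on $\fp \oplus \fm$. By assumption (*), every such invariant form (in particular $g_t$, $T|_{G/H}$, and $\Ric(g_t)$) is block diagonal with respect to the splitting $\fp \oplus \fm$. Applying the standard formulas for $\Ric(dt^2+g_t)$ in terms of the shape operator $L_t = \tfrac12 g_t^{-1} g_t'$ and the intrinsic Ricci tensor of $g_t$, the equation $\Ric(g) = T$ on principal orbits reduces to a coupled second-order ODE system for the matrix-valued function $t \mapsto g_t$, with coefficients depending smoothly on $t$ and $T$.

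Second, extract the singular behavior at $t=0$. Smoothness of $g_t$ as a $G$-invariant metric on $G\times_K\Disc$ forces the $\fp$-block to take the form $t^2 h^{\fp}(t^2)$, with $h^{\fp}(0)$ a multiple of the round metric on $\Sph^\ell$, while the $\fm$-block is smooth in $t^2$. Substituting these expansions, the ODE system acquires a regular singular point at $t=0$ of Fuchsian type. Expand all tensors in Taylor series in $t$ and match coefficients order by order: the recursion at each level is linear in the new coefficient and governed by an indicial operator whose (finite-dimensional) kernel identifies the resonant orders. Inverting this operator on the complement of its kernel determines the formal Taylor series, with freedom consisting of (i) the $\fm$-block at $t=0$, i.e.\ the induced metric on $G/K$, (ii) the first-derivative data at $t=0$, i.e.\ the second fundamental form, and (iii) finitely many resonant constants. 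Convergence to an actual smooth solution then follows from a contraction-mapping argument in a weighted Banach space of smooth tensor fields adapted to the Fuchsian structure.

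The main obstacle is the analysis of the indicial operator and its kernel: one must verify that the free parameters match exactly the data asserted in the theorem, and that the finitely many resonant coefficients can always be chosen consistently with the parity/smoothness conditions imposed by extension across the singular orbit. This is where condition (*) is crucial: without the orthogonal splitting $\fp \oplus \fm$, the indicial operator acquires off-diagonal coupling terms that mix $\fp$ and $\fm$ at resonant orders, considerably complicating both the counting of free parameters and the verification of smoothness — precisely the difficulties to be addressed in Part II.
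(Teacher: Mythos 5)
Your overall strategy --- reduce to a second order ODE system for $g_t$ with a regular singular point at $t=0$, build a formal power series, identify the resonant orders, and then upgrade to an actual solution --- is the same skeleton as the paper's argument. But the proposal has a genuine gap exactly at the point you yourself flag as ``the main obstacle'': you never verify that the finitely many compatibility (resonance) conditions can actually be satisfied. This is not a routine verification; it is the bulk of the paper's proof (Sections 3 and 4 and Theorem C). Concretely, when one solves the equations $\sum a_{ij}^k\Ric_{ij}=\sum a_{ij}^k T_{ij}$ for $\phi_k''$, the cases where the metric combination has a nonzero constant term (on $\fm$) or where one must divide by $t^2$ (on $\fp$) force linear conditions on the $\phi_k(0)$, i.e.\ on $A(0)$ and $B(0)$ in the expansions $g_{|\fm}=A_0+tA_1+t^2A(t)$, $g_{|\fp}=t^2\Id+t^4B(t)$. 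Solvability requires that $\tr A(0)_{|\fm_i}$, $A(0)_{|\fm_0}$, the off-diagonal entries $A(0)_{u\bar u}$ for equivalent modules, and $\tr B(0)$ each appear with a \emph{nonzero} coefficient in $\Ric_M(0)$. The paper proves this by showing that all contributions of $A(0)$ and $B(0)$ coming from the intrinsic Ricci curvature of $G/H$ cancel after adding the second fundamental form terms and taking traces over each irreducible $\Ad_K$-module, leaving only the extrinsic coefficients $-(\dim\fp+1)$ and $-3$ from Proposition \ref{extrinsic}. Without this cancellation argument (which uses the symmetries \eqref{skew} of the structure constants and Schur's lemma module by module), there is no reason the ``indicial operator'' is invertible where you need it to be, and the existence claim is unproved.

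Two further inaccuracies. First, the $\fm$-block of $g_t$ is \emph{not} smooth in $t^2$: it has the odd term $tA_1$ encoding the second fundamental form, and more generally the smoothness conditions of \cite{VZ1} are overdetermined linear relations $\sum a_{ij}^k g_{ij}=t^{d_k}\phi_k(t^2)$ with varying $d_k$, not mere block-evenness; getting these right is what makes the counting of free parameters ($\phi_k(0)$ with $d_k>2$, trace-free parts of $A(0)$ and $B(0)$, etc.) come out correctly. Second, for the passage from formal series to actual solution the paper invokes Malgrange's theorem \cite{Ma1} for smooth $T$ (and \cite{Ma2,GG} for convergence in the analytic case); your proposed weighted-Banach-space contraction is not set up, and asserting it works is essentially asserting the conclusion of Malgrange's theorem without proof. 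Citing that result, or carrying out the fixed-point scheme in detail, is needed to close the argument.
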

We will also  show that one can  prescribe $T(\dot c,\dot c)$
 by reparametrizing the geodesic, see Section 6.2. But in general  $T(\dot c,X)$ with $X$ tangent to the regular orbits can  not be prescribed, unless it is forced to be zero by $\Ad_H$ invariance, i.e. $\fp_0=\fm_0=\{0\}$. For special $T$, e.g., when $T$ is diagonal, the full tensor can be prescribed.  If $T$ is analytic, the solution is analytic as well.

\smallskip

We now describe the method of proof, and the values of the free parameters. We fix a smooth curve $c(t)$ transverse to all orbits and choose a metric of the form $g=dt^2+g_t$ where $g_t$ is a one parameter family of homogeneous metrics on the regular hypersurface orbits $G\cdot c(t)=G/H$.  The metrics $g_t$ are determined by a positive definite matrix of smooth functions $g_{ij}(t),\ t>0$. These functions need to satisfy an ODE which blows up at the singular orbit. The ODE is of regular singular type and hence  does not necessarily have a solution. We will show that in our case there exists a  formal power series solution. If $T$ is smooth, there exists a nearby smooth solution to $\Ric(g)=T$ by \cite{Ma1}. If $T$ is analytic,  the power series converges, see \cite{Ma2,GG}, and hence the solution is analytic as well.

  One of the problems is that one first needs to determine when the metric (and the tensor) on the regular part extends smoothly to the singular orbit. In \cite{VZ1} it was shown that this can be done in terms of a set of even functions. More precisely, there exist  $a_{ij}^k\in \mathbb{R}$ and integers $ d_k\ge 0$  such that the metric $g_{ij}$ has a smooth extension to the singular orbit if and only if
$$\sum_{i,j} a_{ij}^k\,g_{ij}(t)=t^{d_k}\phi_k(t^2)\quad \text{ for } k=1,\cdots, N,  $$
where $\phi_1,\cdots,\phi_{N}$ are smooth even functions. The numbers  $a_{ij}^k $ and the integers $d_k$  are determined by  the structure constants of the Lie group $G$. This over determined  system of $N$  equations can  be solved for the coefficients  of the metric, thus describing $g_{ij}$ in terms of $r$ even functions, where $r$ is the number of distinct non-zero entries in $g_{ij}$. The advantage of this method is that  smoothness of the metric is now automatically built in.

By substituting these expressions for the metric into  the differential equations, we   will  show that one can  solve for the second derivatives of $\phi_i$, thus obtaining a system of the form $\phi_i''=F_i(\phi_j,\phi_j',T),\ i=1,\cdots,N$. Since the ODE is of regular singular type,
the smoothness of the   functions $F_i$  is sometimes obstructed unless  some of the initial conditions $\phi_i(0)$  satisfy certain (linear) {\it compatibility conditions}. Once these are satisfied, one  obtains, by differentiating $F_i$, a formal power series for the metric.

We will see that  the same proof   can in fact be applied to many geometric ODE's on a cohomogeneity one manifold. This holds e.g., for Einstein, K\"ahler Einstein, soliton or more generally quasi-Einstein metrics. In this case the second Bianchi identity implies that, if there exists a singular orbit,  the regular orbits are orthogonal to $\dot c$, and that $\Ric(\dot c, \dot c)$ is constant. Thus we have:
\begin{main}
	Let  $M=G\times_K \Disc$ be a homogeneous disc bundle satisfying condition (*). Given the metric and the second fundamental form of the singular orbit,  there exists an analytic solution to the Einstein equation, or the soliton equation,  in a neighborhood of $G/K$. The solution is unique, up to finitely many arbitrarily chosen  constants. 
\end{main}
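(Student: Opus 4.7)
The strategy is to reduce the Einstein and soliton equations to the framework of Theorem A, by treating the right hand side as depending self-consistently on the unknown metric (and, in the soliton case, on an unknown potential function $f$), and then to use the twice contracted second Bianchi identity to propagate the spatial equation to the full equation.

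The first step is to invoke the observation stated before the theorem: for a cohomogeneity one metric satisfying the Einstein or soliton equation in the presence of a singular orbit, the regular orbits are automatically orthogonal to the transverse curve $c(t)$ and $\Ric(\dot c, \dot c)$ is constant. One may therefore work from the start with the standard ansatz $g = dt^2 + g_t$, where $g_t$ is a one parameter family of homogeneous metrics on the principal orbits. Under this ansatz, $\Ric(g) = \lambda g$ decomposes into the spatial equation $\Ric(g)|_{G/H} = \lambda g_t$ together with the scalar equation $\Ric(\dot c, \dot c) = \lambda$ and the vanishing of the mixed components $\Ric(\dot c, X) = 0$ for $X$ tangent to the orbit. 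The soliton equation $\Ric(g) + \Hess(f) = \lambda g$ has an analogous decomposition; since $f$ is a function of $t$ only by $G$-invariance, its spatial part becomes $\Ric(g)|_{G/H} + f' \II_t = \lambda g_t$, where $\II_t$ is the shape operator of the orbit through $c(t)$.

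The second step is to apply the proof of Theorem A with $T$ taken to be $\lambda g_t$ in the Einstein case, or $\lambda g_t - f'\II_t$ in the soliton case. The regular singular system $\phi_i'' = F_i(\phi_j, \phi_j', T)$ derived there becomes, after substitution, a coupled analytic system in the unknowns $\phi_j$ (and, for solitons, $f$), whose right hand side depends analytically on its arguments and whose leading order structure is unchanged. Hence the formal power series method and the compatibility conditions of Theorem A apply essentially verbatim, producing a formal power series solution which converges by the analytic version of \cite{Ma2,GG}. In the soliton case one supplements this with a second order ODE for $f$ coming from the $(\dot c, \dot c)$ component; the value $f(0)$ and the soliton constant $\lambda$ are among the finitely many arbitrary constants in the statement.

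The final step is to upgrade a solution of the spatial problem to a solution of the full Einstein or soliton equation. Here the twice contracted second Bianchi identity is used: the tensor $E = \Ric(g) - \lambda g$, respectively $E = \Ric(g) + \Hess(f) - \lambda g$, has vanishing spatial part by construction, and Bianchi gives first order ODEs along $c$ for its remaining components $E(\dot c, X)$ and $E(\dot c, \dot c)$. Smoothness of $g$ (and $f$) at the singular orbit forces these quantities to vanish at $t = 0$, so they vanish identically. The main obstacle in carrying out this plan is the bookkeeping for the compatibility conditions derived in Theorem A, which must be checked to remain consistent once $T$ is no longer externally prescribed but coupled to the unknown metric and potential; one also needs to verify that the count of free parameters matches the assertion, namely the metric and second fundamental form on $G/K$ together with finitely many further constants arising from $\lambda$ and, in the soliton case, from $f$.
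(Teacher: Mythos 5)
Your proposal follows essentially the same route as the paper: reduce to the tangential system of Theorem A with $T=\lambda g$ (resp.\ $\lambda g - u'\,\II$ for the potential $u$), observe that the compatibility conditions and the Malgrange argument go through because the right-hand side is analytic in the unknowns and the leading singular structure is unchanged, and use the second Bianchi identity together with smoothness at the singular orbit to dispose of the $(\dot c,\dot c)$ and mixed components. The only cosmetic difference is that for solitons the paper substitutes the Bianchi-derived second-order ODE for $v=u'$ (written as $v=2t\psi$ with $\psi$ even) directly into the coupled system and applies Malgrange to everything at once, rather than solving the spatial part first and propagating afterwards.
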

%For a soliton, or more generally an m-quasi Einstein metric, one obtains an analytic solution as well for the even potential $u$, which is uniquely determined by $u(0)$, which we can assume to be zero, and $u''(0)$. See Section 6.1.

\smallskip

To be more explicit about the compatibility conditions, and the choice of free parameters,  we expand the metric
\begin{equation*}
	g_{|\fm}=A_0+tA_1  +t^2 A(t),\qquad g_{|\fp}=t^2\Id+t^4B(t),\qquad g_{|\fp\fm}=t^2C_0 + t^3C(t)
\end{equation*}
where $A_0,A_1$ and $C_0$  are constant matrices and  $A,B,C$ are smooth matrix functions of  $t$.  Smoothness may imply further restriction on $A(t),B(t)$ and $C(t)$, see the discussion above. Notice though that under the assumption  (*) we have $g_{|\fp\fm}=0$. 
 Here,  $A_0$ and $A_0^{-1}A_1$ are the metric and the second fundamental form of the singular orbit $G/K$. Thus we want to choose $A_0$ and $A_1$ arbitrarily.

 Under the action of $\Ad_K$ we can split $\fm$ into irreducible modules $\fm_i$ and a trivial module $\fm_0$. Without assuming that $g_{|\fp\fm}=0$,  the following holds.
\begin{main}\label{compatibility}
	Let  $M=G\times_K \Disc$ be a homogeneous disc bundle. Then  we have the following properties for any cohomogeneity one metric on $M$.
	\begin{itemize}
\item[(a)] If $\fm_i\subset\fm$ is an irreducible $K$ module, then  
		$$
	\tr \Ric_M(0)_{|\fm_i}=		-(\dim(\fp) +1)\tr A(0)_{|\fm_i} +f_1(A_0,A_1,C_0)$$
\item[(b)]  If $\fm_i$ and $\fm_j$ are equivalent $K$ modules and  $u\to \bar u$ is an explicit equivalence , then 
$$
\sum_u \Ric_M(0)_{u \bar u}=(\dim(\fp) +1)\sum_u A(0)_{u \bar u}
+f_2(A_0,A_1,C_0)
$$
\vspace{2pt}
\item[(c)] 	 If $\fm_0\subset\fm$ is a trivial $K$ module, then  
			$$
		 \Ric_M(0)_{|\fm_0}=	- ( \dim(\fp)+1)\, A(0)_{|\fm_0}$$
\item[(d)] On the module $\fp$ we have
$$
\lim_{t\to 0}\frac1{t^2}\tr\Ric(0)_{|\fp}= -3  \tr B(0)+f_3(A_0,A_1,C_0, A(0))
$$
	\end{itemize}
\end{main}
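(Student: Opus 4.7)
The strategy is to apply the standard Ricci formula for a metric of the form $g = dt^2 + g_t$ on a cohomogeneity one manifold. Writing $B_t(X,Y) = \tfrac{1}{2}\dot g_t(X,Y)$ and $L_t = g_t^{-1} B_t$ for the shape operator of the regular orbit, one has
$$
\Ric(g)(X,Y) = \Ric(g_t)(X,Y) - B_t'(X,Y) - \tr(L_t)\,B_t(X,Y) + 2\,B_t(L_t X,Y)
$$
for orbit-tangent $X,Y$, where $\Ric(g_t)$ is the Ricci tensor of the homogeneous metric $g_t$ on $G/H$. I would substitute into this formula the given expansions of $g_{|\fm}$, $g_{|\fp}$, $g_{|\fp\fm}$, together with the derived
$$
L_t|_{\fp} = \tfrac{1}{t}\Id + tB(t) + O(t^2), \qquad L_t|_{\fm} = \tfrac{1}{2}A_0^{-1}A_1 + O(t),
$$
and the key identity $\tr L_t = \tfrac{\dim\fp}{t} + \tfrac{1}{2}\tr(A_0^{-1}A_1) + t\tr B(0) + O(t^2)$, which carries a simple pole at $t=0$.

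For $X,Y\in\fm$, the coefficient of $A(0)(X,Y)$ in $\Ric(g)|_\fm(0)$ receives $-1$ from $-B_t'$ (via $\tfrac{1}{2}\ddot g_t|_\fm(0) = A(0)$) and $-\dim\fp$ from the mean-curvature term $-\tfrac{1}{2}\tr(L_t)\dot g_t$, through the pole $\dim(\fp)/t$ hitting the $2tA(0)$ part of $\dot g_t|_\fm$; the remaining pieces — including $\Ric(g_t)|_\fm$ and $2B_t(L_tX,Y)$ — depend only on $A_0, A_1, C_0$ and form $f_1$. Since $\Ric(g)|_{\fm_i}$ is $\Ad_K$-equivariant, Schur's lemma on an irreducible $\fm_i$ forces it to be a scalar multiple of $Q|_{\fm_i}$, giving (a) by tracing; the analogous pairing argument on equivalent modules $\fm_i\cong\fm_j$ using $u\mapsto \bar u$ yields (b). For (c), the triviality $[\fk,\fm_0]=0$ makes the bracket-induced contributions in $\Ric(g_t)|_{\fm_0}$ and all $C_0$ cross terms vanish identically, leaving only the universal $-(\dim\fp+1)A(0)|_{\fm_0}$ with no remainder.

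For (d), the analogous expansion with $X,Y\in\fp$ is performed. The constant $(1-\dim\fp)\delta_{XY}$ from the shape-operator side is cancelled by $(\dim\fp-1)\delta_{XY}$ arising from the round-sphere intrinsic Ricci of the collapsing fiber $K/H = \Sph^\ell$ inside $\Ric(g_t)|_\fp$; the linear-in-$t$ contributions cancel analogously, reflecting the Ricci-flatness of the flat model $dt^2+t^2\Id$. Reading off the $t^2$ coefficient and taking the trace over $\fp$, the shape-operator side produces the claimed term proportional to $\tr B(0)$ with constant $-3$, while $\Ric(g_t)|_\fp$ at order $t^2$ contributes a polynomial in $A_0, A_1, C_0, A(0)$ that assembles into $f_3$. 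The main obstacle throughout is the detailed expansion of $\Ric(g_t)$, a sum over structure constants of $\fg$ weighted by blocks of $g_t$ and $g_t^{-1}$: one must verify that the singular $1/t$ and $1/t^2$ contributions cancel precisely and that the finite remainders assemble into the advertised $f_1, f_2, f_3$. Under assumption $(*)$ the $\fp\fm$ block vanishes and the analysis simplifies; without $(*)$ the $C_0$ cross terms must be carefully tracked via $\Ad_H$-invariance of the equivalence isomorphisms appearing in (b).
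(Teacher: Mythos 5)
Your setup coincides with the paper's: the Gauss-equation splitting of $\Ric_M$ into the intrinsic Ricci curvature of $G/H$ plus the shape-operator terms, the expansion of $P$ and $P^{-1}$ in $t$, and Schur's Lemma to reduce to traces; your computation of the extrinsic side is correct and reproduces the coefficients $-(\dim\fp+1)$ and $-3$. The gap is in the sentence asserting that ``the remaining pieces --- including $\Ric(g_t)|_{\fm}$ --- depend only on $A_0,A_1,C_0$'' (and its analogue for $\fp$, where you claim $\Ric(g_t)|_{\fp}$ at order $t^2$ contributes a polynomial in $A_0,A_1,C_0,A(0)$ only). This is false pointwise. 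The intrinsic Ricci tensor of $G/H$ is a sum over structure constants weighted by entries of $P$ and $P^{-1}$; since $P^{-1}|_{\fp}\sim t^{-2}\Id$, products of the form $P^{-1}_{rs}P_{pq}$ with $r,s$ indexing the collapsing directions and $p,q\in\fm$ pick out the $t^2A$ term of $P|_{\fm}$ and contribute \emph{finite, nonzero} $A(0)$-dependent terms to $\Ric_{G/H}(e_u,e_v)(0)$ (the paper's \eqref{mRic1}--\eqref{mRic2}); likewise $B(0)$ enters $\Ric_{G/H}|_{\fp}$ at order $t^2$. These are not among the ``singular $1/t$ and $1/t^2$ contributions'' whose cancellation you defer to the end --- they are regular terms, and they do not cancel entry by entry.

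What saves the theorem, and what constitutes the actual content of the paper's proof, is that these intrinsic $A(0)$- and $B(0)$-terms cancel only \emph{after summing over an orthonormal basis of the irreducible module}, i.e., after taking the trace --- a step that is legitimate because $\Ric_M(0)$ is $K$-invariant and hence scalar on $\fm_i$ by Schur, whereas $\Ric_{G/H}(0)$ and $L(0)$ separately are not $K$-invariant. The cancellation uses the symmetry $\Gamma_{rs}^k=-\Gamma_{rk}^s$ for $r\in I$ and $s,k\in J$ together with a relabeling of summation indices, and in the equivalent-module case additionally the intertwiner relations $\Gamma_{ri}^k=\Gamma_{r\bi}^{\bk}$. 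Without this step your argument establishes the claimed identities only modulo an unidentified $A(0)$-dependent (resp.\ $B(0)$-dependent) correction coming from the orbit curvature, which is exactly what must be ruled out to conclude that the coefficient of $\tr A(0)_{|\fm_i}$ is $-(\dim\fp+1)$ and that of $\tr B(0)$ is $-3$. As the paper itself emphasizes, showing that the entire $A(0)$, $B(0)$ dependence comes from the second fundamental form and not from the curvature of $G/H$ is the main difficulty; your proposal assumes it.
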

\noindent where $f_i$ are explicit polynomials.  In part (b), we need to consider 1,2,or 3 equivalences, depending on whether the representation is orthogonal, complex or symplectic. Notice that if  $\fm_i$ and $\fm_j$ are non-equivalent $K$ modules, then $ \Ric_M(0)(\fm_i,\fm_j)=0$ by Schur's Lemma. We also observe that  $A(0)$ and $B(0)$ in (a)-(d) are never forced to be identically $0$ by smoothness.

We will show, that in order to satisfy the compatibility conditions, we
must be able to prescribe $\Ric_M(0)_{|\fm_i}$, $\Ric_M(0)(\fm_i,\fm_j)$ and  $\lim_{t\to 0}\frac1{t^2}\Ric(0)_{|\fp}$ arbitrarily, after choosing $A_0$ and $A_1$. Since they are $K$ invariant, this is equivalent to prescribing their trace. Thus if we want to solve $\Ric(g)=T$ (or $\Ric(g)=\lambda g$), Theorem C states that the values of $\tr A(0)_{|\fm_i},\ A(0)_{|\fm_0} $ and $\tr B(0)$ for the metric $g$ are  determined by $T(0)$ (resp. $A_0$ and $\lambda$). After these are thus chosen, we obtain a formal power series for the metric. If $g_{|\fp\fm}\ne 0$, there are further compatibility conditions. In Part II we will show that these can be satisfied as well.

This implies that we  have a simple description of the free parameters in Theorem A and B. The trace free part of $A(0)$ and $B(0)$, if non-zero, as well as  $\phi_i(0)$ with $d_k>2$, are free parameters. If $\fm_i$ and $\fm_j$ are equivalent $K$ representations, all entries of $A_{u \bar u}$, except for one, are free parameters, and if  in-equivalent, all possible entries of the metric are free parameters. Thus we can immediately read off the number of free parameters, and their values in terms of the coefficients of the metric. In Section  5 we give some examples that illustrate our methods.

\smallskip

 Surprisingly, the contribution from $A(0)$ and $B(0)$ in Theorem C all come from the second fundamental form, and not the curvature of $G/H$.  Proving the latter is in fact the main difficulty in our proof of Theorem A and B.

\smallskip

We remark that all of our results hold more generally if   $G/H$ is a homogeneous space for which  $g(\fp,\fm)= 0$ implies $\Ric_{G/H}(\fp,\fm)=0$. See \cite{KPS} for a large collection of such examples, including many where $\fm_0\ne $\{0\}$\} $.

\smallskip

After one solves the initial value problem, the next question is of course weather there exist examples or obstructions to complete solutions on a cohomogeneity one manifolds. This is a rather delicate problem, 
 see e.g., \cite{PS1,DW1,DG1,DG2, Ba} or \cite{Bo, FH} in the compact case.

\smallskip

This initial value problem for Einstein metrics was studied in
\cite{EW} (and for soliton metrics in \cite{Buz}) under the assumptions (*) and  $\fm_0=\{0\}$ as well as that $G$ is compact. It  requires a significant amount of representation theory in order to apply it in concrete examples, whereas in our case this is not necessary. In  addition, smoothness conditions at the singular orbit are expressed in a more indirect way, and are thus difficult to determine explicitly in terms of the metric functions, see e.g., \cite{Ch2} Proposition 2.4. Notice that this result follows  immediately  from Theorem C.

 \smallskip

The paper is organized as follows. In Section 1, we recall the structure of a cohomogeneity one manifold, describe the metric in terms of even functions and discuss the smoothness conditions.  
 In Sections 2  we outline our strategy of proof, and discuss what needs to be proved in order to show that the compatibility conditions can be satisfied. In Section 3 we compute the contribution of the second fundamental form to the compatibility condition, and in Section 4 that of the intrinsic curvature of the regular orbit. In Section 5 we illustrate our method with some examples  and in Section 6 discuss further applications.

%%%%%%%%%%%%%%%%%%%%%%%%%%%%%%
%%%%%%%%%%%%%%%%%%%%%%%%%%%%%%%
\section{\bf Preliminaries}%%%%%%%%%%%
%%%%%%%%%%%%%%%%%%%%%%%%%%%%%%
%%%%%%%%%%%%%%%%%%%%%%%%%%%%%%%

\bigskip

For a general reference for this Section see, e.g.,  \cite{AA,AB}. 
A  non-compact cohomogeneity one manifold is given by a homogeneous vector bundle
and a  compact and simply connected one by  the union of two
homogeneous disc bundles. We can restrict ourselves to only one such bundle. Let    $H,\, K ,\, G
$ be Lie groups with inclusions $H\subset K \subset G$ such that $H,K$ are compact and
$K/H=\Sph^{\ell},\ \ell>0$. The transitive action of $K$ on
$\Sph^{\ell}$ extends (up to conjugacy) to a unique linear action on the vector space $V=\R^{{\ell}+1} $.
We can thus define the homogeneous vector bundle
$M=G\times_{K}V$ and $G$  acts on $M$  via left action in the first component. This action has principal isotropy group $H$, and singular isotropy group $K$.
A neighborhood of the singular orbit is given by $G\times_{K}\Disc$ where   $\Disc\subset V$ is a disc centered at  $p_0=0\in V$.

\smallskip

We choose a vector $e_0\in V$ and 
consider the straight line $c(t)=te_0$.
At the regular points $c(t)$, i.e., $t>0$,
the isotropy is constant  equal to  $H$ and at $c(0)=0$ it is equal to $K$.
We fix an $\Ad_H$ invariant splitting $\fg=\fh\oplus\fn$ and identify the tangent space $T_{c(t)} G/H=\dot{c}^\perp\subset T_{c(t)}M$  with $\fn$
via action fields: $X\in\fn\to
X^*(c(t))$. The stabilizer group $H$ acts on $\fn$ via the adjoint representation
and a $G$ invariant metric on $G/H$ is described by an $\Ad_H$
invariant inner product on $\fn$. On the regular part, the metric along $c$
is thus given  by $g=dt^2+g_t$, with $g_t$ a one parameter family of $\Ad_H$ invariant inner products on the vector space $\fn$, depending smoothly on $t$. 

We choose a left invariant metric  on $G$, right invariant under $K$, determined by the inner product $Q$ on $\fg$. Hence $\Ad_K$ acts by isometries in $Q$.
We then choose a $Q$ orthogonal $\Ad_H$ invariant  splittings
$$
\fg=\fh\oplus\fn \text{ and } \fn=\fn_0\oplus\fn_1\oplus\ldots\oplus\fn_r.
$$
where $\Ad_H$ acts trivially on $\fn_0$ and irreducibly on $\fn_i$ for $ i>0$, and 
 an $\Ad_K$ invariant $Q$ orthogonal complement to $\fk\subset\fg$:
$$\fg=\fk\oplus \fm  , \quad  \fk=\fh\oplus \fp \ \text{ and thus }\ \fn=\fp \oplus \fm . $$
Here $\fm$ can be viewed as the tangent space to the singular orbit $G/K$ at
$p_0=c(0)$ and $\fp$ as the tangent space of the sphere $K/H\subset V$.
Furthermore, let
$$
\fm=\fm_0\oplus\fm_1\oplus,\cdots,\oplus\fm_k 
$$ 
be  a $Q$ orthogonal decomposition,
where $\fm_0$ is a trivial $\Ad_K$ module and  $\fm_i,i>0$ are $\Ad_K$ irreducible. We can assume that $\fn_j\subset\fp$ or $\fn_j\subset\fm_i$ for some $i$.   Finally,  choose a $Q$ orthogonal $\Ad_H$ invariant decompositions for $\fp$:
$$
\fp=\fp_0\oplus\fp_1\oplus\fp_2 
$$
where $\Ad_H$ acts trivially on $\fp_0$ and irreducible on $\fp_1$ and $\fp_2$. Depending on the sphere $K/H$, there may be one, two or three such modules.

Since $K$ preserves the slice $V$, and acts linearly on it,
we have an embedding $\fp\subset V$ defined by $X^*(c(t))=tX$ for $X\in\fp$.    In this language,    $V\simeq \dot c(0)\oplus \fp$. 
Notice that, since $K$ acts transitively on the unit sphere in $V$,  a $K$  invariant inner product on $V$ is determined  up to a multiple. Since for any $G$ invariant metric we fix a geodesic $c$, which we assume is parameterized by arc length, this uniquely  determines the inner product on $V$, which we denote by $g_0$. 
 Via the inclusion $\fp\subset V$, we will also use this inner product $g_0$ on $\fp$. Notice though that $g_0$ is a Euclidean inner product on $V$, but does not necessarily agree on $\fp$ with the restriction of a bi-invariant metric of $K$ (see [GZ], Lemma 2.5).

 We identify the homogeneous metrics $g_t$ with  an endomorphism $P_t$ in terms of action fields:
 \begin{equation}\label{metric}
 	P_t\colon\fn\to\fn\ \text{ where } \ g_t(X^*,Y^*)_{c(t)}=Q(P_tX,Y), \ \text{ for } \ X,Y\in\fn.
 \end{equation}

These endomorphisms $P_t$ commute with the action of $\Ad_H$ since the metric and $Q$ are  $\Ad_H$ invariant. By Schur's Lemma we have ${P_t}_{|\fn_i}=a(t) \Id_{|\fn_i}$ for $i>0$ and some positive function $a$, whereas on $\fn_0$ it is an arbitrary invertible endomorphism. Furthermore,  $\fn_i$ and $\fn_j$ are orthogonal if the representations of $\Ad_H$ are non-equivalent. If $\fn_i$ and $\fn_j$ are two equivalent modules, inner products between them are described by $\Ad_H$ equivariant intertwining operators $f\colon  \fn_i\to\fn_j$, which can be chosen to be $Q$ isometries. If the representations are orthogonal, then $f$ is unique and if $e_i$ is a $Q$ orthonormal basis of $\fn_i$, we let $e_{\oi}=f(e_i) $ be a $Q$ orthonormal basis of $\fn_j$. Then $Q(P_t(e_i),e_{\oj})=b(t)\,\delta_{ij}$ for some function $b\ne 0$.   If the representations are  complex, then $\fn_i$ has a $Q$ orthogonal complex structure $J\colon \fn_i  \to\fn_i$ and we choose a complex $Q$ orthonormal basis $\{e_i,J(e_i)\}$ of $\fn_i$ and the $Q$ orthonormal basis $\{e_{\oi}=f(e_i),\ J(e_{\oi})=f(J(e_i))\}$ of $\fn_j$. Then 
$$
Q(P_t(e_i),e_{\oj})=Q(P_t(J(e_i),J(e_{\oj})=c(t)\,\delta_{ij}, \ Q(P_t(e_i),J(e_{\oj})) =- Q(P_t(J(e_{i})),e_{\oj})=d(t)\,\delta_{ij}
$$
for some functions $c,d$.
Similarly, if $\fn_i$ is symplectic, we have three anticommuting complex structures on $\fn_i$ and corresponding inner products, as above, in terms of four functions. These are in fact all non-vanishing component of the metric.

Choosing a $Q$ orthonormal basis $e_i$ of $\fn$, the functions $g_{ij}=Q(P_t(e_i),e_j)$ define the metric on the regular part. At $t=0$ they satisfy certain smoothness conditions which guarantee that the metric $P_t$ extends smoothly to the singular orbit. These are somewhat complicated and were determined in \cite{VZ1}. For this, one chooses  closed one parameter groups 
$L_i=\exp(\theta v_i)\subset K$, one for each module $\fp_i$, i.e., $v_i\in\fp_i$. Then $V$ and $\fm$  decompose into 2 dimensional invariant subspaces $\ell_j $ isomorphic to $\C$, on which $L$ acts by multiplication with $e^{i\,m_j\theta}$. 
  Each such decomposition under $L_i$ will
give  constraints for the components of $P$  that have the form
\begin{equation}\label{smooth}
\sum a_{ij}^k\,g_{ij}=t^{d_k}\,\phi_k,\quad k=1,\cdots, N
\end{equation}
where $\phi_k$ is a smooth even function of $t$ and $d_k\geq 0$  an integer. The entries $a_{ij}^k$ are determined by the structure constants of $\fg$ and $d_k$ is described in terms of the above values of $m_j$ as in Table C in \cite{VZ1}.  
We may collect all the corresponding constraints and we get an over determined system of equations in terms of the even functions $\phi_i$. These functions  satisfy linear relations which enables one to express $g_{ij}$ in terms of a subset of $r$ even functions, where $r$ is the number of distinct  non-zero elements in $g_{ij},i\le j$.   See \cite{VZ1} Example 3 for a typical illustration.        Any symmetric bi-linear form $T$, in particular the Ricci tensor, satisfies the same conditions, with the same constants $a_{ij}^k$, except that possibly the exponents $d_k$ can differ by two for the mixed terms $T(\fp,\fm)$. Since the condition (*) implies that these mixed terms are $0$, this difference will be discussed in Part II.

For  $X\in\fp$, there is no need to take linear combinations. If $|X|_{g_0}=1$, smoothness simply  states that $|X|^2=t^2+t^4\phi$ for some even function $\phi$.

The formula for the Ricci tensor of $M$, restricted to the regular orbits, can be  obtained by the Gauss equation for the regular hypersurfaces:
\begin{equation}\label{Gauss}\Ric_M(X,Y) =\Ric_{G/H}(X,Y)+
Q(LX,Y)
\end{equation} where 
\begin{equation}\label{shape}
	L= -\frac 14 \tr(P^{-1}P') P' +\frac 12 P' P^{-1} P'-\frac 12 P''\ ,
\end{equation}
Note that if we regard $\Ric$ as endomorphisms, we need to replace  $\Ric$ by  $P^{-1}\Ric$. This needs to be taken into account if we compare the above  formulas with those in \cite{EW}   and in some other papers.  In our notation $P$ and $L$ are endomorphisms, and $P^{-1}P'$ is the shape operator. Sometimes we may regard $L$ as a bi-linear form, i.e., $L(X,Y)=Q(LX,Y)$, which will be clear from context.

 The intrinsic curvature of the homogeneous space is more complicated and presents the main difficulty in solving the initial value problem. One has the following formula for the Ricci tensor of a homogeneous space (see e.g., \cite{Be}):

\begin{align*}
\Ric_{G/H}(X,Y)=&-\frac12\sum_{i} g([X_i,X]_\fn,[X_i,Y]_\fn)+\frac14\sum_{i,j} g([X_i,X_j]_\fn,X)g([X_i,X_j]_\fn,Y)\\
&-\frac12 B(X,Y)-g( [Z,X],Y)
\end{align*}
where $Z=\sum_i U(X_i,X_i)$ with $2g(U(X,Y),Z)=g([Z,X],Y)+g([Z,Y],X)$. Furthermore, $B$ is the Killing form of $\fg$ and $X_i$ is a $g$-orthonormal basis of $\fn$.  The last term involving $Z$ vanishes when the group $G$ is unimodular (e.g., when $G$ is compact). But we will see in Section 6 that  it does not effect the compatibility conditions. For simplicity, we postpone this (easy) discussion, and thus assume from now on that $Z=0$.

We choose a $Q$ orthonormal basis $e_u$ of $\fn$ such that it is adapted to the decomposition of $\fp\oplus\fm$, the decomposition of $\fm=\fm_0\oplus\fm_1\oplus\cdots\oplus \fm_r  $ and furthermore to the decomposition of $\fn$ under $\Ad_H$. We choose  index sets $I$ and $J$ such that
$$
e_u\in \fp \text{ if } u\in I, \quad \text{and} \quad e_u\in \fm \text{ if } u\in J.
$$
Thus we have in the inner product $Q$:

\begin{align*}
\Ric_{G/H}(X,Y) 
=& -\frac12\sum_{r,s} P^{-1}_{rs}\ Q(P[e_r,X]_\fn,[e_s,Y]_\fn)   \\
& +\frac14\sum_{r,s,k,l}P^{-1}_{rs}P^{-1}_{kl} \ Q(P[e_r,e_k]_\fn,X)Q(P[e_s,e_l]_\fn,Y) -\frac12 B(X,Y).
\end{align*}

In this formula, $P_{rs}=Q(Pe_r,e_s)$. Now define the structure constants
$$
\Gamma_{ij}^u=Q([e_i,e_j],e_u) \ \text{ or equivalently }\  [e_i,e_j]_\fn=\sum_t \Gamma_{ij}^t e_t\ .
$$

\smallskip

$\Ad_K$ invariance implies the following symmetry properties for the structure constants:
\begin{align}\label{skew}
	\Gamma_{rs}^k = - \Gamma_{sr}^k ,\quad &\text{ for all } r,s,k \nonumber  \\ 
	\Gamma_{rs}^k = -\Gamma_{rk}^s ,\quad &\text{ for all } r,s,k \in I  \\
	\Gamma_{rs}^k = -\Gamma_{rk}^s ,\quad &\text{ for all } r\in I \text{ and } s,k \in J  \nonumber \\
	\Gamma_{rs}^k =0,\quad &\text{ if two indices lie in $I$ and the third in $J$  }  \nonumber
\end{align}
These are in fact all the symmetry properties that will be needed, in particular $G$ does not have to be compact.
A straightforward computation shows that  above formula turns into:
\begin{equation}\label{ric}
\Ric_{G/H}(e_u,e_v) =  -\frac12\sum_{r,s,p,q}\Gamma_{rp}^u\Gamma_{sq}^v P^{-1}_{rs} P_{pq}
+\frac14\sum_{r,s,k,l,p,q} \Gamma_{rk}^p\Gamma_{sl}^q  P^{-1}_{rs}P^{-1}_{kl}  P_{pu}P_{qv} -\frac12 B(e_u,e_v)
\end{equation}
for $e_u,e_v\in\fn$.
For example, if the metric is diagonal, i.e. $P_{rs}=x_s\delta_{rs}$, then
\begin{equation}\label{RicciDiagonal}
\Ric(e_u,e_v)=\sum_{r,k}\frac{x_ux_v-2x_k^2}{4x_rx_k}\Gamma^u_{rk}\Gamma^v_{rk}-\frac12 B(e_u,e_v).
\end{equation}
 Notice the Ricci curvature does not have to be diagonal, even if the metric is.

 For completeness, we also give the remaining components of the Ricci tensor:
 In arc length parametrization  we have

\begin{align}\label{Riccic}
	\Ric_M(\dot c,\dot c)&=  
	\tfrac{1}{4}\tr(P^{-1}P'P^{-1}P')-\tfrac{1}{2}\tr(P^{-1}P'')\\ \nonumber
	\Ric_M(e_u,\dot c)&=\frac12\sum_{k,s}\Gamma_{uk}^s (P^{-1}P')_{ks}
	-\frac12 \sum_{k,s}\Gamma_{ks}^s(P^{-1}P')_{ku}\\ \nonumber
	 -g( [Z,e_u],e_v)&=- \sum_{k,\ell,s,t}\Gamma_{ks}^s\Gamma_{\ell u}^tP^{-1}_{k\ell}P_{tv}\nonumber
\end{align}

\smallskip

\section{\bf Compatibility Conditions}
In this section we do not assume that $C=0$, and  hence the results can be used in Part II.

\noindent We want to solve the equation $\Ric(g)=T$ for a given smooth tensor $T$ (or the Einstein equation $\Ric(g)=\lambda g$). As we will see, $g$ is already uniquely determined, up to a finite collection of constants, if we prescribe only $T_{|\fn}$, i.e. the tensor $T$  tangent to the orbits.  For an Einstein metric (or soliton metric) the second Bianchi identity implies that on the regular orbits $\Ric(\dot c, \dot c)$ is constant and $\Ric(\dot c, \fn)=0$. Thus this is sufficient to prove the existence of an Einstein metric.     We prefer to use the language of prescribing the Ricci tensor, since it  illustrates the proof more clearly.
\smallskip

\smallskip
Recall that the metric is smooth iff
\begin{equation}\label{Metricsmooth}
	\sum_{i,j} a_{ij}^k\,g_{ij}(t)=t^{d_k}\phi_k(t^2)\quad \text{ for } k=1,\cdots, N, 
\end{equation}
where $\phi_1,\cdots,\phi_{r}$ are smooth even functions, and $ a_{ij}^k$ and $d_k$ do not depend on the metric.   The Ricci curvature, and the tensor $T$, satisfies the same smoothness conditions:
\begin{equation}\label{Ricsmooth}
	\sum_{i,j} a_{ij}^k\,\Ric_{ij}(t)=t^{d_k}\phi_k(t^2)\quad \text{ for } k=1,\cdots, N,  
\end{equation}
with the same coefficients $ a_{ij}^k$ and $d_k$. But if  $C\ne 0$, the integers $d_k$ may be two less than the one for the metric.

 We will show that it is possible to solve the equations 
\begin{equation}
\label{sysric}
\Ric(g)_{|\fn}=T_{|\fn}
\end{equation}
if and only  if the values of  $\phi_k(0)$ satisfy certain linear relations.
We will  call these the compatibility conditions. The proof will show that this linear system admits solutions, but some of the values of $\phi_k(0)$ may be chosen arbitrarily.

The Ricci curvature of a cohomogeneity one manifold is determined by the formulas \eqref{Gauss},\eqref{shape} and\eqref{ric}. They show that
it depends linearly on $P''$. Thus  in $\sum a_{ij}^k\,\Ric_{ij}$, the term $\phi_k''$ is the only second order derivative appearing, with a nonzero coefficient. Our strategy is then to solve the equations
\begin{equation}\label{newequations}
	\sum_{i,j} a_{ij}^k\,\Ric_{ij}(t)=\sum_{i,j} a_{ij}^k\,T_{ij}(t) 
\end{equation} for $\phi_k''$, in which case  \eqref{sysric} is equivalent to a system:
\begin{equation}\label{newequations}
\phi_k''=F_k(\phi_j,\phi_j',T),\ k=1,\cdots, N.
\end{equation}
We will show that the functions  $F_k$ are smooth  iff certain linear compatibility conditions among the initial conditions $\phi_k(0)$  hold. 
  By differentiating the equations \eqref{newequations} we find a formal power series for the functions $\phi_k, k=1,\cdots N$ and hence for the metric $g_{ij}$. If $T$ is smooth,  Malgrange's theorem \cite{Ma1} implies that there exists a nearby smooth solution. If $T$ is analytic, the power series converges, see  \cite{Ma2,GG},     and hence we obtain an analytic solution near the singular orbit. Notice that $\phi_k'(0)=0$, and hence prescribing $\phi_k(0)$  gives rise to a unique solution, after the compatibility conditions  are satisfied. This  may give rise to a number of free parameters among the values of  $\phi_k(0)$.
   We will see that these compatibility conditions are  in fact determined by Theorem C.

\smallskip

The main part of the proof is thus to compute the compatibility  conditions and show they can be solved.  We consider the various cases that lead to the equations \eqref{Metricsmooth} and \eqref{Ricsmooth} in the smoothness conditions separately.

\subsection{Vectors tangent to the singular orbit.}
Let us fix a basis $X_i$ of $\fm$.
We know that it is possible to combine the components of the metric on $\fm$ so that some linear combination will be equal to one of the forms
$$\sum a_{ij}^k g(X_i,X_j)=\begin{cases}a_k+t^2\,\phi_k &\\t^{d_k}\, \phi_k &\end{cases}$$
where  $\phi_k$ is an even function,  $d_k\geq 1$ is an integer and $a_k\neq 0$. The same (except that $a_k$ is allowed to be $0$) holds for the Ricci tensor and the tensor $T$.  We consider the two cases separately
\subsubsection{The linear combination does not vanish at $t=0$.}
The smoothness conditions state that
$$\sum a_{ij}^k \Ric(X_i,X_j)=b_k+t^2\,\eta_k, \quad \text{ and }\quad  \sum a_{ij}^k T(X_i,X_j)=\rho_k,$$
for some smooth even functions $\eta_k,\rho_k$ and some constant $b_k\ne 0$.  We thus need to  solve  an equation of the form
$$\sum a_{ij}^k \Ric(X_i,X_j)=\psi_k-\frac 12 t^2\,\phi_k''=\rho_k$$
for $ \phi_k''$, where $\psi_k=\psi_k(\phi_i,\phi_i')$ is another even function obtained by plugging the expressions for $g_{ij}$ in terms of $\phi_i$ into the formula for the Ricci tensor.  According to our description of the smoothness,  $\psi_k$ does not contain any second derivatives. Since both $\psi_k$ and $\rho_k$ are even, we can solve this equation for $\phi_k''$ if and only if  
$$
\psi_k(0)=b_k=\rho_k(0)
$$
which is possible if and only if
$$\sum a_{ij}^k \Ric(X_i,X_j)(0)$$
can be arbitrarily assigned, even after the metric on the singular orbit and the second fundamental form are chosen.

For an Einstein metric the equation becomes  $\sum a_{ij}^k \Ric(X_i,X_j)(0)=\lambda a_k$ for some (arbitrarily chosen) Einstein constant $\lambda$.  Since $a_k$ is determined by the metric at the singular orbit, and is chosen arbitrarily ahead of time, the same proof solves the initial value problem for an Einstein metric near the singular orbit.

\subsubsection{The linear combination  vanishes at $t=0$.}\label{t=0}
Here we have
$$\sum a_{ij}^k \Ric(X_i,X_j)=t^{d_k}\,\eta_k,\quad \sum a_{ij}^k T(X_i,X_j)=t^{d_k}\,\rho_k.$$
and we need to solve 
$$\sum a_{ij}^k \Ric(X_i,X_j)=t^{d_k}\psi_k-\frac 12 t^{d_k}\,\phi_k''=t^{d_k}\,\rho_k$$
for $\phi_k''$, where $\psi_k$ is some new smooth even function. 
 Thus in this case there are no compatibility conditions.

%%%%%%%%%%%%%%%%%%%%%%%%%%%%%%%%%%%%%%%%%%%%%%%%%%%%%%%%%%%%%

\subsection{$X,Y$ in the slice $\fp$}
In this case there is no need to combine the components of the metric and the Ricci tensor since smoothness is equivalent to the assumption that the  components of the metric or $\Ric$ are even functions. Thus it is sufficient to consider any $X=Y$. 
Smoothness implies that
$$
g(X,X)=t^2+t^4\,\phi_X, \quad \Ric(X,X)=t^2\,\eta_X, \quad T(X,X)=t^2\,\rho_X
$$ 
for a unit vector $X\in\fp$, i.e., $|X|_{g_0}=1$.
Notice that for the Ricci curvature and for $T$, unlike for the metric, the second derivative at $t=0$ can be arbitrary.
 We need to solve 
$$\Ric(X,X)=t^2\,\eta_X=t^2\,\psi_X-\frac 12 t^4\,\phi_X''=t^2\,\rho_X$$
for $\phi_X''$, where $\psi_X$ is another even function.
Hence the compatibility condition  is satisfied 
 if and only if
$$\lim_{t\to 0} \frac 1 {t^2} \Ric(X,X)$$
can be arbitrarily assigned, even after the metric on the singular orbit and the second fundamental form are chosen.
 
%%%%%%%%%%%%%%%%%%%%%%%%%%%%%%%%%%%%%%%%%%%%%%%%%%%%%%%%%%%%%
\subsection{$X$ tangent to the singular orbit and $Y$ in the slice} Let $X_i$ be a basis of $\fm$, and $Y_j$ a basis of $\fp$.
We know that it is possible to combine the components of the metric so that some linear combination will be of the form
$$\sum a_{ij}^k g(X_i,Y_j)= t^{d_k}\, \phi_k $$
where  $\phi_k$ is an even function, and  $d_k\geq 2$  an integer. For the Ricci tensor and $T$ two cases may occur:
$$\sum a_{ij}^k \Ric(X_i,Y_j)=\begin{cases}t^{d_k-2}\, \eta_k  &\\t^{d_k}\, \eta_k &\end{cases},\quad \sum a_{ij}^k T(X_i,Y_j)=\begin{cases}t^{d_k-2}\, \rho_k  &\\t^{d_k}\, \rho_k &\end{cases}$$
since, unlike for the metric, $\Ric(X_i,Y_j)(0)$  and $T(X_i,Y_j)(0)$ do not have to vanish. We consider the two cases separately.

\subsubsection{Ricci and $g$ vanish with the same order at $t=0$.}
Using the expression of the Ricci tensor, and substituting the metric, we get
$$\sum a_{ij}^k \Ric(X_i,Y_j)=t^{d_k}\psi_k-\frac 12 t^{d_k}\,\phi_k''$$
where $\psi_k$ is a new smooth function of $t$. If $\rho_k$ is  a prescribed even function, we want to see if it is possible to solve
$$\sum a_{ij}^k \Ric(X_i,Y_j)=t^{d_k} \rho_k.$$
This means we have to solve
$$\psi_k-\frac 12  \,\phi_k''=\rho_k$$
and hence there is no  compatibility condition for this case.

\subsubsection{Ricci and $g$ vanish with different orders at $t=0$.}
Using the expression of the Ricci tensor and the metric,  and using the fact that we know the order of vanishing at $t=0$, we get
$$\sum a_{ij}^k \Ric(X_i,Y_j)=t^{d_k-2}\psi_k-\frac 12 t^{d_k}\,\phi_k''$$
where $\psi_k$ is a smooth even function of $t$. As before, the function $\psi_k$ does not contain second derivatives. If $\rho_k$ is  a prescribed even function, we want to see if it is possible to solve
$$\sum a_{ij}^k \Ric(X_i,Y_j)=t^{d_k-2} \rho_k.$$
Thus we have to solve
$$\psi_k-\frac 12 t^2 \,\phi_k''=\rho_k$$
and hence the compatibility condition is
$$\psi_k(0)=\rho_k(0)$$
which is possible iff
$$\frac 1{t^{d_k-2}}\,\sum a_{ij}^k \Ric(X_i,Y_j)(0)$$
can be arbitrarily assigned, even after the metric on the singular orbit and the second fundamental form are prescribed. This is equivalent to prescribing the first nonzero derivative of $\sum a_{ij}^k \Ric(X_i,Y_j)$ at $t=0$. This case is significantly more complicated and will be discussed in Part II.

\begin{rem}
\label{cnotrel}
Note that $Y_i$ vanishes at $t=0$, hence $\Ric_M(X_i,Y_i)(0)=0$. We will show, in the second part of the paper, that, writing
$$g_{|\fm\fp} =t^2 C_0+t^3 C(t)$$
the elements of $C_0$ are not involved in the compatibility conditions.  This implies that we can have compatibility conditions only if $d_k>2$. 
\end{rem}

%%%%%%%%%%%%%%%%%%%%%%%%%%%%%%%%%%%%%%%%%%%%%%%%%%%%%%%%%%%%%

\section{\bf Contribution from the second fundamental form}

We start by computing the terms in $\Ric_M(e_u,e_v)(0)$ that depend on $L$, again without assuming that $C=0$.   Recall that 
\begin{equation}
L= -\frac 14 \tr(P^{-1}P') P' +\frac 12 P' P^{-1} P'-\frac 12 P''.
\end{equation}
and that we chose $Q$ such that $Q_{|\fp}$ is the Euclidean inner product $g_0$.

Let 
\begin{equation}\label{Pseries}
	P_{|\fp}= t^2\, \Id+t^4 B,\qquad P_{|\fm}=A_0+t A_1+t^2 A,\qquad P_{|\fm\fp}=t^2 C_0+t^3 C
\end{equation}
where $A_0,A_1,C_0$ are constant matrices and  $A,B,C$ are smooth functions of  $t$.  Smoothness may imply further restriction on $A,B$ and $C$, but it turns out that  this is not needed for our computations.

For the inverse we have 
\begin{equation}\label{Pinverse}
	P^{-1}_{|\fp}= \frac{1}{t^2}-(B-C_0^tA_0^{-1}C_0) +\cdots  ,\quad P^{-1}_{|\fm}=A_0^{-1}-t \ A_0^{-1} A_1A_0^{-1} +\cdots,\quad P^{-1}_{|\fm\fp}=-A_0^{-1}C_0 +\cdots
\end{equation}
and it turns out that higher order terms will not be needed. 

\begin{prop}\label{extrinsic}
	The contribution to the compatibility conditions from  $L$  for the components of the Ricci tensor in $\fm$ or $\fp$, are as follows:
	\begin{eqnarray*}
		(a) \,L(\fm,\fm)(0)&=&- ( \dim(\fp)+1)\,A(0)-\frac 14 \tr (A_0^{-1}A_1)\cdot A_1 +\frac 12 A_1A_0^{-1}A_1+2C_0C_0^t.\\
		 (b)  \,\lim_{t\to 0}\frac1{t^2}L(\fp,\fp)&=&-2\dim(\fp)\ B(0)- \tr(B(0))\Id_\fp  + \frac12 \tr(A_0^{-1}A_1A_0^{-1}A_1)\ \Id_\fp\\
		&-&  \trace(A_0^{-1} A )\Id_\fp+  \tr(C_0^tA_0^{-1}C_0)\ \Id_\fp.
	\end{eqnarray*}
\end{prop}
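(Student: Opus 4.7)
The plan is to substitute the block expansions \eqref{Pseries} and \eqref{Pinverse} of $P$, $P^{-1}$ and their derivatives into the three summands of \eqref{shape} and read off the relevant coefficient in the decomposition $\fn=\fm\oplus\fp$. Because $P$ collapses on $\fp$ at $t=0$, individual summands of $L$ acquire $t^{-1}$ singularities on $\fm\fm$ and $t^0$ constants on $\fp\fp$; these will be shown in Section 4 to cancel against matching singularities in $\Ric_{G/H}$ and so do not enter the compatibility condition. What the proposition asserts is the finite (in (a)) or leading $t^2$ (in (b)) contribution of $L$, which is what I compute.

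A block-wise calculation using the Schur-complement expansion of $P^{-1}$ yields
\begin{align*}
(P^{-1}P')_{\fm\fm} &= A_0^{-1}A_1 + O(t), \\
(P^{-1}P')_{\fp\fp} &= \tfrac{2}{t}\,\Id_\fp + 2t\,B(0) + O(t^2), \\
\tr(P^{-1}P') &= \tfrac{2\dim(\fp)}{t} + \tr(A_0^{-1}A_1) + \tau_1\, t + O(t^2),
\end{align*}
where
$$\tau_1 = 2\tr(B(0)) + 2\tr\bigl(A_0^{-1}A(0)\bigr) - \tr\bigl((A_0^{-1}A_1)^2\bigr) - 2\tr\bigl(C_0^tA_0^{-1}C_0\bigr).$$
In obtaining $\tau_1$ the two cross-pairings $(P^{-1})_{\fm\fp}(P')_{\fp\fm}$ and $(P^{-1})_{\fp\fm}(P')_{\fm\fp}$ each contribute $-2\tr(C_0^tA_0^{-1}C_0)$ at order $t$, partially compensated by $+2\tr(C_0^tA_0^{-1}C_0)$ from the subleading correction of $(P^{-1})_{\fp\fp}$.

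For part (a), evaluate each summand of $L$ on $\fm\fm$ at $t=0$. The third summand gives $-A(0)$ since $(P'')_{\fm\fm}(0)=2A(0)$. The first summand, $-\tfrac14\tr(P^{-1}P')\,(P')_{\fm\fm}$ with $(P')_{\fm\fm}=A_1+2tA(0)+O(t^2)$, has finite part $-\dim(\fp)A(0)-\tfrac14\tr(A_0^{-1}A_1)\,A_1$ (from $\tfrac{2\dim(\fp)}{t}\cdot 2tA(0)$ and from $\tr(A_0^{-1}A_1)\cdot A_1$). For the second summand one splits $(P'P^{-1}P')_{\fm\fm}$ according to the two middle indices; of the four pairings only two survive at $t=0$: the pairing $(P')_{\fm\fm}(P^{-1})_{\fm\fm}(P')_{\fm\fm}$ contributes $A_1A_0^{-1}A_1$, and the pairing $(P')_{\fm\fp}(P^{-1})_{\fp\fp}(P')_{\fp\fm}$ contributes $4C_0C_0^t$ once the $t^{-2}$ pole of $(P^{-1})_{\fp\fp}$ is balanced by the two factors of $t$ in the off-diagonal blocks of $P'$. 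Summing the three summands gives exactly (a).

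For part (b), I extract the coefficient of $t^2$ on $\fp\fp$. The third summand contributes $-6B(0)$ since $(P'')_{\fp\fp}(t)=2\Id_\fp+12t^2B(0)+O(t^3)$. The first summand, $-\tfrac14\tr(P^{-1}P')\,(P')_{\fp\fp}$ with $(P')_{\fp\fp}=2t\Id_\fp+4t^3B(0)+O(t^4)$, has $t^2$ coefficient $-\tfrac14\bigl[8\dim(\fp)B(0)+2\tau_1\Id_\fp\bigr]=-2\dim(\fp)B(0)-\tfrac{\tau_1}{2}\Id_\fp$. For the second summand, the four block pairings of $(P'P^{-1}P')_{\fp\fp}$ contribute at order $t^2$: the main pairing $(P')_{\fp\fp}(P^{-1})_{\fp\fp}(P')_{\fp\fp}$ gives $12B(0)+4C_0^tA_0^{-1}C_0$, while the three other pairings contribute $+4,-4,-4$ copies of $C_0^tA_0^{-1}C_0$; all four $C_0^tA_0^{-1}C_0$ pieces cancel, leaving $12B(0)$, which with the prefactor $\tfrac12$ yields $+6B(0)$, cancelling the $-6B(0)$ from the third summand. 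Assembling everything produces $-2\dim(\fp)B(0)-\tfrac{\tau_1}{2}\Id_\fp$, and substituting $\tau_1$ gives (b). The main obstacle is the bookkeeping in (b): one must expand every block of $P^{-1}$ to the required subleading order via the Schur complement and verify the delicate cancellations, so that the non-scalar endomorphism $C_0^tA_0^{-1}C_0$ is absent from the final answer while its trace appears only as a coefficient of $\Id_\fp$ through $\tau_1$.
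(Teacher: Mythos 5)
Your computation is correct and matches the paper's own proof in both substance and structure: you substitute the expansions \eqref{Pseries}, \eqref{Pinverse} into the three summands of \eqref{shape}, extract the constant term on $\fm\fm$ and the $t^2$-coefficient on $\fp\fp$, and all the intermediate values (the order-$t$ coefficient of $\tr(P^{-1}P')$, the $6B(0)$ versus $-6B(0)$ cancellation, and the cancellation of the non-scalar $C_0^tA_0^{-1}C_0$ blocks in $(P'P^{-1}P')_{\fp\fp}$) agree with what the paper obtains. The only difference is that you spell out the block-pairing bookkeeping that the paper leaves implicit.
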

\begin{proof}

	(a) $L(\fm,\fm)$.
	For the compatibility condition in $\Ric(\fm,\fm)$ we need to compute the constant term.
	The constant term in $-\frac 14 \tr(P^{-1}P') P'$  is easily seen to be equal to
	$$-\frac 14 \tr(A_0^{-1}A_1)\,A_1-  \dim(\fp)\,A(0)$$
	\smallskip
	and in $\frac 12 P' P^{-1} P'$ the constant term is:
	$$
	\frac 12 A_1 A_0^{-1}A_1+2C_0C_0^t
	$$

	Finally, in $-\frac 12 P''$ the constant term is  $-A(0).$
	Summing these, we get part (a).
	
	\bigskip
	
	(b) $L(\fp,\fp)$.
	For the compatibility condition in $\Ric(\fp,\fp)$ we need to compute the order two term. In $-\frac 14 \tr(P^{-1}P') P'$  it is 
	
	\begin{align*}
		&-2\dim(\fp)\ B- \tr(B)\Id_\fp  + \frac12 \tr(A_0^{-1}A_1A_0^{-1}A_1)\ \Id_\fp - \trace(A_0^{-1} A )\Id_\fp+\tr(C_0^tA_0^{-1}C_0)\ \Id_\fp
	\end{align*}
	
	\smallskip

	and in $\frac 12 P' P^{-1} P'$ it is $6B(0)$ and in  $-\frac 12 P''$ it is clearly $-6B(0)$. Thus (b) holds.
	
\end{proof}
 Note that $L$ may not be smooth at $t=0$ since a $t^{-1}$ term will appear in the formula.
 
The case of the second fundamental form for inner products between $\fp$ and $\fm$ will be discussed in part II.

\section{\bf Contributions from the intrinsic Ricci curvature}

In this Section we discuss the compatibility conditions for the inner products $g(\fm,\fm)$ and $g(\fp,\fp)$ coming from the curvature of $G/H$,  again without assuming that  $C=0$. 
We will see that eventually, these contributions all vanish, and thus Theorem A and C follow from \pref{extrinsic}.

\smallskip

Recall that for the intrinsic Ricci curvature we have:
\begin{equation}\label{RicciGH}
\Ric_{G/H}(e_u,e_v) =  -\frac12\sum_{r,s,p,q}\Gamma_{rp}^u\Gamma_{sq}^v P^{-1}_{rs} P_{pq}
+\frac14\sum_{r,s,k,l,p,q} \Gamma_{rk}^p\Gamma_{sl}^q  P^{-1}_{rs}P^{-1}_{kl}  P_{pu}P_{qv}
\end{equation}
Here we dropped $B$, which does not depend on the metric,  and $Z$ since they will not contribute to the compatibility conditions. 

\smallskip
Recall also that we chose an index sets $I$ and $J$ such that
$
e_u\in \fp \text{ if } u\in I, \quad \text{and} \quad e_u\in \fm \text{ if } u\in J
$.
Our goal is to compute the intrinsic contributions to the compatibility conditions by substituting  \eqref{Pseries} and \eqref{Pinverse}  into \eqref{RicciGH}.

\subsection{Inner products in $\fm$}
According to the discussion of the previous sections we only need to consider the case where 

\begin{equation}\label{sum}\sum a_{ij}^k g(X_i,X_j)=a_k+t^2\,\phi_k  \quad \text{and}\quad \sum a_{ij}^k \Ric(X_i,Xj)=b_k+t^2\,\eta_k
\end{equation}
where
  $\phi_k$ and  $\eta_k$ are even functions and $a_k\neq 0$. We then  must be able to prescribe the value of
$b_k$ arbitrarily. We first want to argue that it is sufficient to show that $\Ric(X_i,X_j)(0)$ can be prescribed arbitrarily for all $i,j$. For this, recall that we have a the decomposition $$\fm=\fm_0\oplus \fm_1\oplus \cdots \oplus \fm_r$$   into irreducible and trivial $K$ modules. 

The derivation of  the smoothness conditions in  \cite{VZ1} are obtained by choosing a circle $L\subset K$ and decomposing $\fm$ under the action of $L$ (see Section 1). This implies that in \eqref{sum} we only need to take linear combinations of components  of $\Ric(X_i,X_j)$ where $X_i,X_j$ both lie in the same module $\fm_k$ or $\fm_0$, or $X_i\in\fm_k, X_j\in \fm_\ell$  for some $k\ne \ell$. We can also assume that $\Ric(\fm_k,\fm_\ell)$ does not vanish identically, which may be forced by some smoothness condition, since otherwise there is no compatibility condition, Since ${\Ric_M}(0)_{|\fm}$ is $K$ invariant  we have the following cases:
\begin{itemize}
	\item If $X_i,X_j\in \fm_k$ for some $k$,  then, since $\Ric(0)_{|\fm_k}=c \Id$ for some constant $c$,  the sum, at $t=0$, consists of only one term. 
	\item If $X_i\in \fm_k$, $Y_j\in \fm_\ell$ and $\fm_k$ and $\fm_\ell$ are non-equivalent, then $\Ric(0)(\fm_k,\fm_\ell)=0$ and hence there is no condition.
	\item 
If $X_i\in \fm_k$,  $Y_j\in \fm_\ell$ and $\fm_k$, $\fm_\ell$ are equivalent and of real type, then
		there is only one non-zero entry, namely $\Ric(0)(X_i,Y_i)$ where $X_i\to Y_i$ is the unique (isometric) equivalence (see Section 1) and hence the sum consists again of only one term. 
	\item If the equivalence is of complex type, there are only  two non-zero entries $\Ric(0)(X_i,Y_i)$ and $\Ric(0)(X_i,JY_i)$ where $J$ is the complex structure. The sum consists of only two terms and the claim follows.
	\item  Similarly,  if the equivalence is of quaternionic  type, there are only four nonzero distinct entries.
	\item  For $\Ric(0)_{|\fm_0}$ we will see that we in fact need to prescribe all values of $\Ric(0)(X_i,X_j)$. 
\end{itemize}

Notice  that this also shows that the various cases all involve different entries in $\Ric(0)_{|\fm}$ and hence the compatibility can be solved independently.

\smallskip
 
  We can thus  discuss the various cases separately according to the irreducible and trivial $K$ modules. Notice also that on a $K$ irreducible module $A_0=a\Id$ for some $a>0$, whereas this is not the case for $A_1$ or $A(0)$.
We start with the following observation.
\begin{lem}
For $e_u, e_v\in \fm$, $\Ric_M(e_u,e_v)(0)$ does not depend on $B(0)$ and $C(0)$.
\end{lem}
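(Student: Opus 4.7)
The plan is to split $\Ric_M(e_u,e_v)=\Ric_{G/H}(e_u,e_v)+L(e_u,e_v)$ via the Gauss formula and treat the two pieces separately. \pref{extrinsic}(a) already expresses $L(\fm,\fm)(0)$ as a polynomial in $A_0, A_1, A(0), C_0$ only, so the question reduces to showing that the intrinsic term $\Ric_{G/H}(e_u,e_v)(0)$ is likewise independent of $B(0)$ and $C(0)$ when $e_u,e_v\in\fm$.

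To check this, I would first expand $P$ and $P^{-1}$ in $t$ using \eqref{Pseries} and \eqref{Pinverse}, supplemented by a short block-inverse computation for the next order of $P^{-1}_{|\fp\fm}$. Inspection shows that $B(0)$ enters $P^{-1}$ only as the constant-in-$t$ coefficient of the $\fp$-$\fp$ block (namely $-(B(0)-C_0^tA_0^{-1}C_0)$), while $C(0)$ enters $P^{-1}$ only at $t$-linear order of the $\fp$-$\fm$ block and enters $P$ only at $t^3$ order of that same block. Substituting these expansions into the two intrinsic terms
\[
T_1=-\tfrac12\sum\Gamma_{rp}^u\Gamma_{sq}^v P^{-1}_{rs}P_{pq},\qquad T_2=\tfrac14\sum\Gamma_{rk}^p\Gamma_{sl}^q P^{-1}_{rs}P^{-1}_{kl}P_{pu}P_{qv},
\]
a careful power-counting in $t$ then isolates the few index placements that could yield a $B(0)$- or $C(0)$-dependent constant-in-$t$ term. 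For $C(0)$ the counting is immediate: the $t$-linear term of $P^{-1}_{|\fp\fm}$ would need a $t^{-1}$ partner, and the $t^3$-term of $P_{|\fp\fm}$ a $t^{-3}$ partner; but $P^{-1}_{|\fp}$ supplies at most a $t^{-2}$ pole while all remaining blocks of $P$ and $P^{-1}$ are of order $t^0$ or higher, so the required negative powers never materialize. Hence $C(0)$ cannot contribute at $t=0$.

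For $B(0)$, the vanishing rule in \eqref{skew} restricts the relevant cases to three: in $T_1$, the subcase $r,s\in I$ (which forces $p,q\in J$); and in $T_2$, the two symmetric subcases $r,s\in I,\,k,l\in J$ and $k,l\in I,\,r,s\in J$ (both forcing $p,q\in J$). Relabelling dummy indices and using the antisymmetry of $\Gamma$ shows that the two $T_2$ subcases give equal contributions. Working in an $A_0$-orthonormal basis on $\fm$, both the $T_1$ contribution and the combined $T_2$ contribution reduce to the same expression $\sum_{p\in J}\Gamma_{rp}^u\Gamma_{sp}^v\,B_{rs}(0)$ (up to an overall constant), with opposite overall signs, and therefore cancel. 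The hardest part will be the index bookkeeping: verifying that no potential case has been overlooked, and that the signs coming from the antisymmetry relations \eqref{skew} combine correctly to produce the final cancellation.
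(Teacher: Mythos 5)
Your overall strategy coincides with the paper's: split off the extrinsic part via the Gauss formula and \pref{extrinsic}(a), dispose of $C(0)$ by power counting in $t$ (the paper does this in one line, noting that the $C(t)$-part of $P_{|\fm\fp}$ vanishes to order $3$), and then isolate the places where the constant term $-B(0)$ of $P^{-1}_{|\fp}$ can enter \eqref{RicciGH}. Your identification of the three relevant index configurations (one in the first sum with $r,s\in I$, two symmetric ones in the second sum) is the same as the paper's, and your observation that the two second-sum configurations coincide after relabelling is correct.

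The gap is in the final cancellation. In the $Q$-orthonormal basis in which \eqref{ric}, \eqref{Pseries} and \eqref{Pinverse} are written, the two $B(0)$-contributions are
\[
\tfrac12\sum_{r,s\in I,\,p\in J}\Gamma_{rp}^u\Gamma_{sp}^v\,B_{rs}(0)\,(A_0)_{pp}
\qquad\text{and}\qquad
-\tfrac12\sum_{r,s\in I,\,p\in J}\Gamma_{rp}^u\Gamma_{sp}^v\,B_{rs}(0)\,(A_0^{-1})_{pp}(A_0)_{uu}(A_0)_{vv},
\]
which are \emph{not} equal and opposite as formal expressions: they carry different $A_0$-weights. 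The cancellation is not a pure sign identity; it uses that $\fm_i$ is $\Ad_K$-invariant, so $\Gamma_{rp}^u\ne 0$ with $r\in I$ forces $p$ and $u$ into the same irreducible $K$-module, where Schur's Lemma gives $A_0=a_i\Id$ and hence $(A_0)_{pp}=(A_0)_{uu}=(A_0)_{vv}$; if $u,v$ lie in different $K$-modules one of the structure constants vanishes, and on the trivial module $\fm_0$ (where $A_0$ need not be a multiple of the identity) all relevant structure constants vanish. This case analysis is exactly the content of the paper's proof and is missing from yours. Your proposed shortcut --- passing to an $A_0$-orthonormal basis of $\fm$ --- is not justified as stated: the Ricci formula, the expansions of $P$ and $P^{-1}$, and the symmetry relations \eqref{skew} are all tied to the $Q$-orthonormal basis, so you would have to check that \eqref{skew} survives the basis change (it does on each $\fm_i$, but only because $A_0$ is $\Ad_K$-invariant, i.e., by the same Schur argument you are trying to avoid) and redo the expansions \eqref{Pseries}--\eqref{Pinverse} in the new basis. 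Either supply that verification or replace the basis change by the module-by-module comparison of the $A_0$-factors; as written, the step ``same expression with opposite signs'' does not hold.
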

\begin{proof}
Using \eqref{RicciGH} and \eqref{Pseries}, \eqref{Pinverse}, we get, for the intrinsic curvature, two terms involving $B(0)$:
$$
\frac12\sum_{r,s\in I,\; p\in J}\Gamma_{rp}^u\Gamma_{sp}^v (B_{rs}(0) (A_0)_{pp}) -\frac12\sum_{r,s\in I,\; p\in J}\Gamma_{rp}^u\Gamma_{sp}^v (B_{rs}(0)  (A_0)^{-1}_{pp} (A_0)_{uu} (A_0)_{vv})
$$
If $e_u,e_v$ lie in the same $K$ irreducible module, then, unless some structure constant vanishes,  $(A_0)_{pp}=(A_0)_{uu}=(A_0)_{vv}$ and the terms cancel. If $e_u,e_v$ lie in different $K$ irreducible modules, then either $\Gamma_{rp}^u$ or $\Gamma_{rq}^v$ vanish, and similarly for the second term. If $e_u,e_v$ lie in a trivial module, the structure constants vanish.
It is easy to see that $\Ric_{G/H}(e_u,e_v)(0)$  depends only on $C_0$ but  not  on $C(0)$, since $C(t)$ vanishes at order at least $3$.

As we saw in \pref{extrinsic}, $B(0)$ and $C(0)$ do not enter in the extrinsic curvature either, and hence the claim follows.
\end{proof}

Now we compute the contribution from $A(0)$ in the curvature of $G/H$.  Surprisingly, these contributions always cancel in $\Ric_M(0)$, but only after we add the extrinsic curvature and take the trace.

 Using \eqref{RicciGH}, one easily sees that contributions from $A(0)$ to $\Ric_{G/H}(e_u,e_v)$, 
writing $A$ for simplicity, consist of 4 terms:

\begin{align}\label{mRic1}
-&\frac12\sum_{r\in I,\; p,q\in J}\Gamma_{rp}^u\Gamma_{rq}^v A_{pq}\nonumber\\
+&2\cdot\frac14\sum_{r\in I ,\; k,l,p,q\in J} \Gamma_{rk}^p\Gamma_{rl}^q  \left(-A_0^{-1}AA_0^{-1}\right)_{kl}\left(A_0\right)_{pu}\left(A_0\right)_{qv}\\
+&2\cdot\frac14\sum_{r\in I ,\; k,l,p,q\in J} \Gamma_{rk}^p\Gamma_{rl}^q  \left(A_0^{-1}\right)_{kl}\left(A_0\right)_{qv}A_{pu}\nonumber\\ 
+&2\cdot\frac14\sum_{r\in I ,\; k,l,p,q\in J} \Gamma_{rk}^p\Gamma_{rl}^q  \left(A_0^{-1}\right)_{kl}\left(A_0\right)_{pu}A_{qv}\nonumber
\end{align}

We will in fact see that, in the cases involved in the compatibility conditions, the terms involving $A_0$ always cancel, and we get that the contribution of $A(0)$ to  the intrinsic curvature is given by

\begin{equation}
\label{mRic2}
-\sum_{r\in I,\; p,q\in J}\Gamma_{rp}^u\Gamma_{rq}^v A_{pq} +\frac12\sum_{r\in I ,\; p, k\in J} \Gamma_{rk}^p\Gamma_{rk}^v  A_{pu} +\frac12\sum_{r\in I ,\; p,k\in J} \Gamma_{rk}^p \Gamma_{rk}^u A_{pv}
\end{equation}

\subsubsection{\bf{Irreducible $K$ modules}}\label{irred}
As we saw, it is  sufficient to compute $\Ric_M(0)(X,X)$ for any $X\in\fm_k$ and show it can be prescribed arbitrarily.
 We  break up the index set $J=J_0\cup J_1\cup\cdots\cup J_k$ such that $e_u\in\fm_i$ if $u\in J_i$. Clearly $\Gamma_{rq}^u=0$ if $r\in I,\ u\in J_k, q\in J_l$ with $k\ne l$. It follows that, in \eqref{mRic1}, the contributions of $A_0$ cancel.  For $e_u\in \fm_i$  \eqref{mRic2} and  \eqref{skew} imply that the contribution of $A(0)$ in $\Ric_{M}(0)(e_u,e_u)$, adding the extrinsic contribution, is equal to:
$$
 -\sum_{r\in I,\; p,q\in J}\Gamma_{rp}^u\Gamma_{rq}^u A_{pq} +\sum_{r\in I ,\; p,k\in J} \Gamma_{rk}^p \Gamma_{rk}^u A_{pu} +L(e_u,e_u)(0)
$$
Since $\Ric_M$ is a multiple of the $\Id$ on $\fm_i$, we can  take the trace and divide by $\dim \fm_i$.  Using \eqref{skew}, and renaming indices, the intrinsic part vanishes since:
 \begin{align*}
 	&-\sum_{r\in I,\; p,q\in J,\; u\in J_i}\Gamma_{rp}^u\Gamma_{rq}^u A_{pq} +\sum_{r\in I ,\; p,k\in J,\; u\in J_i} \Gamma_{rk}^p \Gamma_{rk}^u A_{pu}=\\
 	& -\sum_{r\in I,\; k,p\in J,\; u\in J_i}\Gamma_{rp}^k\Gamma_{ru}^k A_{pu} +\sum_{r\in I ,\; p,k\in J,\; u\in J_i} \Gamma_{rp}^k \Gamma_{ru}^k A_{pu}=0
 \end{align*} 
Thus
$$
\Ric_M(0)_{|\fm_i}=\frac1{\dim \fm_i}\tr(L(0))_{|\fm_i} +f(A_0,A_1,C_0)
$$
 and according to \pref{extrinsic} (a) we can solve the compatibility condition for $\tr A(0)_{\fm_i}$ since its coefficient is non-zero.
 
   We note that  only $\Ric_M(0)_{|\fm}$ is $K$ invariant, but not $\Ric_{G/H}(0)_{|\fm}$  or $L(0)_{|\fm}$, and thus the cancellations will only be visible after we take the trace in $\fm_i$. The contributions of   $A_0, A_1$ and $C_0$ in $f$ are explicit in the extrinsic part according to  \pref{extrinsic},  but there are further ones in $\Ric_{G/H}(0)$,   and a constant term as well. We leave it to the reader to compute those explicitly from \eqref{RicciGH}.

\subsubsection{\bf{Equivalent $K$ Modules}}
We now consider the case of two equivalent $\Ad_K$ irreducible modules  $\fm_1, \fm_2$.     Let $J_k, k=1,2$ be the index sets such that $e_u\in\fm_k$ for $u\in J_k$.  
 We first assume that the representations are of real type. Thus there exists a unique isometry $f\colon \fm_1\to\fm_2$, an intertwining operator between $\fm_1$ and $\fm_2$, i.e.,  $f([v,w])=[v,f(w)],\ v\in \fk, w\in\fm_1$. We choose a $Q$-orthonormal basis $e_i$  of $\fm_1$, and the $Q$-orthonormal basis
 $\e_{\oi}=f(e_i)$   of $\fm_2$. Notice that this also induces a bijective map on index sets: $F\colon J_1\to J_2$ with $F(k)=\bar k$. For the structure constants we  get the relations:
\begin{equation}\label{Grel}
\text{ for } r\in I \text{ we have } \Gamma_{ri}^k=\Gamma_{r\bar{i}}^{\bar{k}} \text{ and } \Gamma_{r\bi}^k=0 \text{ for all }  i,k\in J_1
\end{equation}
since, e.g., for $v\in\fk$ and $w_1,w_2\in\fm_i$
$$
\pro{[v,w_1]}{w_2}= \pro{f[v,w_1]}{f(w_2)}=\pro{[v,f(w_1)]}{f(w_2)}
$$
By Schur's Lemma and $K$ invariance of  $\Ric_M(0)$ we have
\begin{equation*}
\Ric_M(0)(e_{u},e_{\bar v})=0,\quad \Ric_M(0)(e_{u},e_{\bar u})=\Ric_M(0)(e_{v},e_{\bar v})\text{ for } u,v\in J_1,\  u\ne v
\end{equation*}
  and we  need to show that  $\Ric_M(0)(e_{u},e_{\bar u})$ can be prescribed arbitrarily.

Although $A_0$ is a multiple of $\Id$ on $\fm_1$ and $\fm_2$, they may not be the same. The contributions of $A_0$ nevertheless cancel due to 
\eqref{Grel}. We can then use \eqref{mRic2} and \eqref{Grel} and, since $\Gamma_{rq}^{p}=0$ if $e_p,e_q$ lie in different $K$ irreducible modules,   one obtains the contribution of $A(0)$ in $\Ric_{G/H}(0)(e_u,e_{\bar u})$:

\begin{align}\label{mRic3}
	& -\sum_{r\in I,\; p\in J_1, q\in J_2}\Gamma_{rp}^u\Gamma_{rq}^{\bar u} A_{pq} +\frac12\sum_{r\in I ,\; p, k\in J_2} \Gamma_{rk}^p\Gamma_{rk}^{\bar u}  A_{pu} +\frac12\sum_{r\in I ,\; p,k\in J_1} \Gamma_{rk}^p \Gamma_{rk}^u A_{p\bar u}\nonumber\\
	&=-\sum_{r\in I,\; p,q\in J_1}\Gamma_{rp}^u\Gamma_{r\bar q}^{\bar u} A_{p\bar q} +\frac12\sum_{r\in I ,\; p, k\in J_1} \Gamma_{r\bar k}^{
		\bar p}\Gamma_{r\bar k}^{\bar u}  A_{\bar p u}+\frac12\sum_{r\in I ,\; p,k\in J_1} \Gamma_{rk}^p \Gamma_{rk}^u A_{p\bar u}\\
	&=-\sum_{r\in I,\; p,q\in J_1}\Gamma_{rp}^u\Gamma_{r q}^{ u} A_{p\bar q} +\frac12\sum_{r\in I ,\; p, k\in J_1} \Gamma_{r k}^{
		 p}\Gamma_{r k}^{ u}  A_{u \bar p }+\frac12\sum_{r\in I ,\; p,k\in J_1} \Gamma_{rk}^p \Gamma_{rk}^u A_{p\bar u}\nonumber
\end{align}
Since $\Ric_M(e_u,e_{\bar u})$ is independent of $u$, after adding the extrinsic contribution,  we can  take the trace in $u$ and divide by $\dim \fm_i$.
The intrinsic part then vanishes since
$$
-\sum_{r\in I,\; p,q,u\in J_1}\Gamma_{ru}^p\Gamma_{r u}^{ q} A_{p\bar q} +\frac12\sum_{r\in I ,\; p, u\in J_1} \Gamma_{r u}^{
	p}\Gamma_{r u}^{ q}  A_{q \bar p }+\frac12\sum_{r\in I ,\; p,u\in J_1} \Gamma_{ru}^p \Gamma_{ru}^q A_{p\bar q} =0
$$
 Thus
$$
 \Ric_M(0)(e_u, e_{\bar u})=\frac1{\dim \fm_i} \sum_{u\in J_1} L(0)(e_u, e_{\bar u}) +f(A_0,A_1,C_0)
 $$
 and \pref{extrinsic} (b) implies that  we can solve the compatibility condition for any of the  variables $A_{u \bar u}$ with $u\in J_1$.
 
\smallskip

Next we consider the case where $\fm_i$ are of complex type. Thus there exists a complex structure $J\colon \fm_1\to\fm_1$ and we can choose a compatible $Q$ orthonormal basis  $\{e_u,J(e_u)\}$ of $\fm_1$ and the corresponding $Q$ orthonormal basis $\{e_{\bar u},J(e_{\bar u})\}$ of $\fm_2$. Due to Schur's Lemma we have  only two non-vanishing components for $\Ric_M(0)(\fm_1,\fm_2)$:
\begin{equation}\label{Ricrel}
\Ric_M(0)(e_u,e_{\bar u})=\Ric_M(0)(J(e_u),J(e_{\bar u})),  \quad \Ric_M(0)(e_u,J(e_{\bar u})) =- \Ric_M(0)(e_{\bar u},J(e_{ u}))
\end{equation}
and we need to show that they  can be prescribed arbitrarily.
 The formulas are actually the same as in the previous case by using the two intertwining operators $$f_1(e_u)=e_{\bar u},\ f_1(J(e_u))=J(e_{\bar u}),\quad \text{ and } \quad f_2(e_u)=J(e_{\bar u}),\ f_2(J(e_u))=e_{\bar u}$$
together with the two bijections $F_1,F_2\colon J_1\to J_2$ of the index sets. Similarly, if $\fm_1$ is of symplectic type, we have three complex structures on $\fm_1$ and obtain the same conclusion.

\smallskip

\subsubsection{\bf{Trivial $K$ Module}}
If $e_u, e_v\in\fm_0$, then all structure constant vanish since $[e_r,e_q]=0$ when $r\in I$ and $q\in\fm_0$ and there is no intrinsic contribution. According to \pref{extrinsic},  all entries $A_{uv}(0)$ are thus determined by the compatibility conditions. Since a trivial $K$ module is also a trivial $H$ module, smoothness implies that $A_{uv}(t)$ are even functions, and that  $A_{uv}(0)$ is not forced to be $0$. Thus we always have compatibility conditions.

\subsection{Inner products in  $\fp$}
In this case smoothness simply means that all components of $\Ric$ are even functions. Thus  we have $\Ric_M(e_u,e_u)=t^2\phi_u$ for some even function $\phi_u$.   We want to compute $\lim_{t\to 0}\frac1{t^2}\Ric(e_u,e_u)$ and show that it can be prescribed arbitrarily. Since the limit is also $K$ invariant, it is independent of $u\in I$.  
From \eqref{RicciGH} it follows that the contribution from $B(0)$ (that we will denote by $B$ for simplicity) in the first part of the  intrinsic curvature is equal to
$$
-\frac12\sum_{r,p \in I}\Gamma_{ru}^p\Gamma_{su}^p (-B_{rs})-\frac12\sum_{r,p \in I}\Gamma_{ru}^p\Gamma_{ru}^q B_{pq}=
\frac12\sum_{r,p \in I}\Gamma_{rp}^u\Gamma_{sp}^u B_{rs}-\frac12\sum_{r,p \in I}\Gamma_{pr}^u\Gamma_{ps}^u B_{rs}=0
$$
In the second part we get two terms:
$$
\frac14\sum_{r,k,p \in I}\Gamma_{rk}^p \Gamma_{rk}^u B_{pu}-\frac14\sum_{r,k \in I}\Gamma_{rk}^u \Gamma_{sk}^u B_{rs}
$$
Adding the  extrinsic part in \pref{extrinsic} (b), we can take the trace over $u$, and we see that the above intrinsic contribution vanishes.
Thus
 \begin{equation*}
 	\lim_{t\to 0}\frac1{t^2}\Ric_M(e_u,e_u)= \frac1{\dim \fm_0}\sum_{u\in I} L(0)(e_u, e_{ u})=-3  \tr B(0)+f(A_0,A_1,A(0),C_0).
 \end{equation*} 
and we can solve for $\tr B(0)$.
 \bigskip

 We summarize the conclusions:
 \begin{itemize}
 	\item There is only one compatibility condition  for each  irreducible $\Ad_K$ module $\fm_i$, and we can solve for  $\tr A(0)_{|\fm_i}$.
 	\item When $Ric_M(\fm_i,\fm_j)(0)$  is not forced to be $0$ by the smoothness conditions, there are one, two or four compatibility conditions  when $\fm_i$, $\fm_j$ are two equivalent irreducible  modules, depending on whether the representation is real, complex, or symplectic. We can solve them for any entry in $A(0)(\fm_i,\fm_j)$.
 	\item $A(0)_{|\fm_0}$ is completely determined by the compatibility conditions.
 	\item There is only one compatibility condition in $\fp$.
 \end{itemize}
Notice though that the condition for $\fp$ involves $A(0)$,  hence we first determine the entries for $A(0)$, and then solve foe  $\tr B(0)$. Furthermore,  the various components of $A(0)_\fm$  are  involved in only one of the compatibility conditions. Hence all compatibility conditions can be satisfied.

\smallskip

This finishes the proofs of Theorems A and B in the Introduction. Notice that we proved Theorem C along the way.

\section{\bf Examples}\label{examples}
We now illustrate the methods in a few examples.

\subsection{Example 1} 
Consider the cohomogeneity one manifold defined by the triple $SO(2)\subset SO(2)\times SO(2)\subset SO(4)$, where the $SO(2)$ are diagonal blocks. Let
$$Z=E_{34},\quad V_1=E_{13}+E_{24},\quad V_2=-E_{14}+E_{23},\quad V_3=E_{14}+E_{23},\quad V_4=-E_{13}+E_{24}.$$
We have
$$\fp=\fp_0=\{Z\},\quad \fm_1=\{V_1,V_2\},\quad \fm_2=\{V_3,V_4\}.$$
Notice that the slice is orthogonal to the orbits, i.e. $C=0$, and hence our theorems apply. The modules $\fm_1$ and $\fm_2$ are irreducible complex equivalent $H$-modules, which are also irreducible $K$ modules, but are non-equivalent under the action of $K$.   
For the metric we have 
$$f=\ml Z,Z\mr, \qquad g_1=\ml V_1,V_1\mr =\ml V_2,V_2\mr,\qquad g_2=\ml V_3,V_3\mr =\ml V_4,V_4\mr,$$
$$
b_1=\ml V_1,V_3\mr =\ml V_2,V_4\mr,\qquad b_2=\ml V_1,V_4\mr=-\ml V_2,V_3\mr
$$
According to Lemma 5 and  6 in \cite{VZ1}, since $d_1=d_2=1$ and $a=1$, smoothness  implies that
$$
f=t^2\Id +t^4 \phi_1,\qquad  g_1=a_1+t^2\phi_2,\qquad  g_2=a_2+t^2\phi_3
$$
$$
b_1=t^2\phi_4,\qquad b_2=t^2\phi_5.
$$
Thus we have
$$
B=\phi_1\Id,\qquad A_0=\diag ( a_1\Id, a_2\Id)\qquad A_1=0
$$
and
$$ A=\begin{pmatrix}
	\phi_2 &0 &\phi_4 &\phi_5\\
	0 &	\phi_2 &-\phi_5 &\phi_4\\
	\phi_4 &-\phi_5 &\phi_3 &0\\
	\phi_5 &\phi_4 &0 &\phi_3\end{pmatrix}$$
Thus the singular orbit is totally geodesic. Since $\fm_i$ are $K$ invariant and irreducible, the compatibility conditions determines $\phi_2(0)$ and $ \phi_3(0)$. Since they are in-equivalent as $K$ modules, there  is no compatibility condition on $\phi_4(0)$ and $ \phi_5(0)$. The value of $\phi_1(0)$ is  determined by the compatibility condition.
Thus we can prescribe the tangential part of the Ricci tensor (or the Einstein equations), and after choosing the metric of the singular orbit, i.e., $a_1,a_2$, the values of $\phi_4(0), \phi_5(0)$ are free parameters.

In this example it is not possible to fully prescribe $Ric_M(\dot c,\dot c)$. Indeed, \cite{VZ1} equation (5) shows  that smoothness implies that $Ric_M(\dot c,\dot c)=\psi_1(t)$,  $Ric_M(Z,Z)=t^2\psi_2(t)$ and $Ric_M(Z,\dot c)=t\psi_3$ where $\psi_1$, $\psi_2$ and $\psi_3$  are even functions. Furthermore, it also implies that $\psi_1(0)=\psi_2(0)$. 

On the other hand, we will see in  Section 6.2  that we can prescribe  $Ric_M(\dot c,\dot c)$ if we allow to reparametrize the geodesic $c$.

\subsection{Example 2}
 Consider the cohomogeneity one manifold defined by the triple $SO(2)\subset SO(3)\subset SO(4)$, where all the embeddings are (upper) block diagonal. Thus $\fh=\{E_{12}\}$. Let
$$V_1=E_{13},\quad V_2=E_{23},\quad V_3=E_{14},\quad V_4=E_{24},\quad Z=E_{34}.$$
Under the $\Ad_H$ decomposition  we have
$$\fp=\{V_1,V_2\},\quad \fm_0=\{Z\},\quad \fm_1=\{V_3,V_4\}.$$
Here $\fp$ and $\fm_1$ are complex equivalent $H$-modules and $\fm=\fm_0\oplus \fm_1$ is an irreducible $K$-module.
The metric is defined by the functions:
$$f=\ml V_1,V_1\mr =\ml V_2,V_2\mr ,\qquad g_1  =\ml Z,Z\mr , \qquad  g_2=\ml V_3,V_3\mr =\ml V_4,V_4\mr,$$
$$
b_1=\ml V_1,V_3\mr =\ml V_2,V_4\mr,\qquad b_2=\ml V_1,V_4\mr=-\ml V_2,V_3\mr
$$
Using Lemma 5, Lemma 7(a) and Lemma 9  in \cite{VZ1},   smoothness implies that
$$
f=t^2\Id +t^4 \phi_1,\qquad  g_1=a_0+t^2\phi_2,\qquad  g_2=a_0+t^2\phi_3
$$
$$
b_1=t^2\phi_4,\qquad b_2=t^3\phi_5
$$
 and thus $C_0=0$. But the order of vanishing for the Ricci tensor, according to Table D in \cite{VZ1}, is  two less for $b_1$ and $b_2$ since $g(\dot c,V_i)=0$. If we assume that $C=0$, i.e., $b_1=b_2=0$, a computation shows that then  $\Ric_M(\fp,\fm)=0$ as well, and we may apply the results of the previous sections for such metrics. In this case we have:
$$
B=\phi_1\Id,\qquad A_0=a_0\Id,\qquad A_1=0,\qquad A=\diag(\phi_2,\phi_3,\phi_3)
$$
 The compatibility conditions in  Theorem C imply that, after we choose $a_0$, the values $\phi_1(0)$ and $\tr A(0)=\phi_2(0)+2\phi_3(0)$ are  determined, and we thus have one free parameters, say $\phi_2(0)=g_1''(0)$, in addition to the choice of $a_0$. In Part II we will discuss the case when $C\ne 0$.
 
 Note that now $\fm_0$ is not necessarily orthogonal to $\dot c$. In this example  smoothness implies that $Ric_M(\dot c,\dot c)=\psi_1$,  $\Ric_M(Z,Z)= \psi_2$  and  $Ric_M(Z,\dot c)=t\psi_3$ where $\psi_1$, $\psi_2$ and $\psi_3$  are even functions.   But now there is no relation between $\psi_1(0)$ and $\psi_3(0)$.
 \smallskip 
 
 There are many examples of this type,  see \cite{KPS}, where $C=0$ implies  $\Ric_M(\fp,\fm)=0$, and hence our Theorems apply.

\subsection{Example 3}
The results have an interesting application to Einstein metrics on Euclidean space. For each transitive action of $K$ on a sphere, we get a corresponding family of cohomogeneity one metrics on Euclidean space. The case with most freedom is  the  action of $K=Sp(n)$  on $\QH^n=\R^{4n}$. It give rise to a 7 parameter family of $K$ invariant cohomogeneity one metrics on $\R^{4n}$. These can be defined in terms of the Hopf fibration $\Sph^3\to \Sph^{4n-1}\to\QH P^{n-1}$. The isotropy representation of $H=Sp(n-1)$ on $\fn=\fp$ decomposes as $\fp_0\oplus\fp_1$ where $H$ acts trivially on $\fp_0\simeq\Im\QH$ and via matrix multiplication on $\fp_1\simeq\QH^{n-1}$. If $X_1,X_2,X_3$ is the standard basis in $\fp_0$, orthonormal in the Euclidean metric, and $Y_i$ a $Q$-orthonormal basis in $\fp_1$, then the metric is determined by the functions
$$
f_{ij}=\ml X_i,X_j\mr,\quad h=\ml Y_i,Y_i\mr
$$
and the smoothness conditions are 
$$
f_{ij}=t^2\,\delta_{ij} +t^4\phi_{ij},\quad h=t^2+t^4\psi
$$
for some even functions $\phi_{ij},\psi$. According to our results, there is only one compatibility condition, which in the case of Einstein metrics (with Einstein constant $\lambda$) says
\begin{equation}\label{free}
	\tr B(0)=-3(\Sigma_i\, \phi_{ii}(0)+4n\psi(0))=\lambda
\end{equation}
Thus we have $6$ free parameters $\phi_{ij}(0)$, and $\psi(0)$ is determined, which gives rise to a $6$-parameter family of analytic Einstein metrics in a neighborhood of the origin. 
 The case of $G=Sp(n)Sp(1)$ was studied in \cite{Wi} and $G=Sp(n)U(1)$  in \cite{Ch1,Ch2} in order to construct complete Einstein metrics and Ricci solitons. In the first  case $f_{11}=f_{22}=f_{33}$, in the second $f_{22}=f_{33}$, and in both cases $f_{ij}=0$ for $i\ne j$. Thus there exists a 2-parameter, resp. 3 parameter family of $G$ invariant metrics with 1 resp. 2 free parameters.

\section{\bf Further solutions}
\bigskip

We now indicate that one can apply the same proof with minor changes to obtain solutions to other geometric problems.

\subsection{Solitons}
Recall that $(M,g,u)$ is called a $m$-quasi Einstein metric if 
\begin{equation}\label{MQ}
\Ric_{u,m}(g)=\Ric(g) +\Hess(u)-\frac1{m} du\otimes du = \lambda g
\end{equation}
where $\lambda$ is the quasi Einstein constant. When $\frac1{m}=0$  solutions are called gradient Ricci solitons.
On a cohomogeneity one manifold  we assume that $g$ and $u$ is $G$-invariant. Notice that if $G$ is compact, the $G$-invariance of $u$ follows via integration, although in the result below, we do not assume that $G$ is compact.   

In \cite{DW2,Wi} it was  shown that, if there exists  singular orbit,   the second Bianchi identity implies that $\Ric(X,\dot c)=0$ for $X\in \fp\oplus\fm$. Furthermore,  on a  cohomogeneity one manifold  \eqref{MQ} becomes (using $L=\frac12 P^{-1}P'$ in their notation):
$$
\Ric_M(x,y)=\Ric_{G/H}  -\frac 14 \tr(P^{-1}P') P' +\frac 12 P' P^{-1} P'-\frac 12 P'' + P'\, u' = \lambda,
$$
$$
\Ric_M(\dot c, \dot c)= \frac 14 \tr(P^{-1}P' P^{-1} P')-\frac12 \tr(P^{-1} P'')+u''-\frac1{m}(u')^2=\lambda
$$
  In \cite{DW2,Wi} it was also  shown, again using the second Bianchi identity,   that the second equation, setting $v=u'$, can be replaced by:
  
  $$
  v''+\frac12\tr(P^{-1}P') v'+\frac12 \tr(P^{-1}P' P^{-1} P')v-\frac12 \tr(P^{-1}P')v-2vv'-\frac1{m}v^2=\lambda
  $$

 The smoothness condition  for $g_{ij}$ is the same as before, and for $u$ it simply says that $u$ is even.  Notice also that we can assume that $u(0)=0$.   We again express the metric  in terms of the even functions $\phi_{ij}$, and  set   $v=2t\psi(t)$   for some even function $\psi$. One easily sees that the term $P'v$ in the first equation does not contribute to any of the compatibility conditions since  $v(0)=0$. Thus our methods imply that the first equation can be solved for $\phi_{ij}''$ in terms of $\phi_{ij}$, $\phi_{ij}'$ and $\psi$, and the second equation can be solved for $\psi''$.  We now apply Malgrange to the combined system, to obtain a convergent power series for the metric and potential.  Hence we also get the same number of free parameters in the metric.  This proves Theorem B in the Introduction.

In \cite{Buz,Wi} it was observed that the initial value problem can be solved as in \cite{EW}, again under  the assumption that $G$ is compact, $C=0$ and  that there is no trivial $\Ad_K$ modules in $\fm$.

\bigskip

\subsection{Prescribing $\mathbf{Ric(\dot c, \dot c)}$}

We now discuss that we can also prescribe the tensor $T$  along the geodesic, i.e., we also want to solve  $T(\dot c,\dot c)=\beta$ together with the tangential components. For this we  change the parametrization of the geodesic (see e.g. \cite{Pu2,Bu1} for special cases). Let $c(t)$ be parameterized by arc length and  $t=f(r)$ with $f'>0$, a change in parameter, and thus the metric is $dt^2+h_t=h^2dr^2+g_r$ with $h=f'$. We can now use our formulas for the curvature in arc length, using the conversion
$$
 \dot P=\frac{dP}{dr} \text{ and hence } P' =  \frac1{h}\ \dot{P},\ P''= - \frac{\dot{h}}{h^3}\dot P+\frac{1}{h^2}
P^{..}
$$
Using \eqref{shape} and \eqref{Riccic} we obtain a  system of equations:
\begin{equation}\label{tangential}
(\Ric_M)_{|G/H} =\Ric_{G/H}+\frac1{4h^2} (- \tr(P^{-1}\dot P) \dot P + \dot P P^{-1} \dot P - \overset{\cdot\cdot}{P} +\frac{2\dot{h}}{h}\dot P)
=T_{|G/H}
\end{equation}
\begin{equation}\label{geodesic}
\Ric_M(\dot c,\dot c)=  \frac{\dot{h}}{h^3}\tr(P^{-1}\dot P) +
\frac{1}{4h}\tr(P^{-1}\dot PP^{-1}\dot P) -\frac{1}{2h}\tr(P^{-1}\overset{\cdot\cdot}{P})=\beta
\end{equation}
where $T(\dot c,\dot c)=\beta$.

We now replace the second equation \eqref{geodesic} by an ODE for $h$ which does not involve the second derivative of $P$, using again the second Bianchi identity $\Div\Ric_M(\dot c)=2d(\Scal_M)(\dot c)$.
A straight forward computation shows that this equation in terms of $r$ is as follows. Here we choose a basis $e_0=\dot c, e_1,\dots,e_n$ where $e_1,\dots,e_n$ is a  $Q$-orthonormal basis of $\fp\oplus\fm$. All indices go from $1$ to $n$. We then have:
\begin{align}\label{replace}
&\frac1{2h^2}\dot\beta-\frac{\dot h}{h^3}\beta=\frac12\tr\left(P^{-1}\dot T\right)-\sum_{uv} P^{-1}_{uv}e_u(T_{v0})
-\frac12\sum\Gamma_{uv}^iP^{-1}_{uv}T_{i0}\\
& +\sum_{s,m,i} \Gamma_{ms}^s(P^{-1}T)_{m0 }
-\frac1{2h^2}\tr(P^{-1}\dot P)\beta\nonumber
\end{align}

The formulas for $\Ric(e_u,e_v)_{G/H}$ are of course the same in terms of $P(r)$ and so are the smoothness conditions for $g_{ij}(r)$, and we express $g_{ij}$ in terms of even functions $\phi_{ij}(r)$. Smoothness for $h$ means that $h$ is even and we set $h=h_0+t^2\psi(r)$ for some positive constant $h_0$ and an even function $\psi$.

We can now  solve the system \eqref{tangential}, \eqref{replace} in terms of $(g_{ij}(r),h(r))$ by solving for $\overset{\cdot\cdot}{\phi}_{ij}$ and $\overset{\cdot}{\psi}$.
 The term $\frac{h'}{2h^3}\dot P$ in \eqref{tangential}  does not contribute to the compatibility conditions in Section 4 since $\dot h(0)=0$.
Hence we get a formal power series for $g$ and $h$ and we can  apply Malgrange to obtain a solution Notice that the  initial condition $h_0>0$ can be chosen arbitrarily.

\bigskip

{\bf Remark} On the other hand, $\Ric(X,\dot c)$, with $X$ tangent to the orbits, can in general not be prescribed since the metric tensor $P$ tangent to the regular orbits, and the parametrization of $c$, are already determined, up to a finite number of constants. Of course if $\fp_0=\fm_0=\{0\}$, this term is automatically $0$. The only other freedom one has is the choice of the transverse curve $c$, which can help in special case. We also remark that if a tensor $T$  is diagonal in some $\Ad_H$ decomposition, then $\Ric$ is diagonal as well (see \cite{GZ}), and $\Ric(\fp,\fm)=0$. Thus our methods imply that the tensor $T$ can be prescribed. 

\subsection{Non-unimodular groups $G$}

We now indicate that the extra term in the intrinsic curvature of the principal orbit $G/H$ for a non-unimodular Lie group $G$ does not contribute to the compatibility conditions.

Recall that the extra term  that contributes to $\Ric(e_u,e_v)$ is
$$
-g( [Z,e_u],e_v) \quad \text{ with } Z=\sum_i U(X_i,X_i)
$$

A straightforward computation shows that this term is equal to:
\begin{equation}
	Z
	=\sum_{k,\ell}\Gamma_{ks}^sP^{-1}_{k\ell}  e_\ell\quad \text{ and } \quad
	-g( [Z,e_u],e_v)=- \sum_{k,\ell,s,t}\Gamma_{ks}^s\Gamma_{\ell u}^tP^{-1}_{k\ell}P_{tv}
\end{equation} 
Notice that if $k\in I$, t the expression  vanishes. This easily implies that in \eqref{RicciGH} the term does not contribute to $B(0)$ or $A(0)$, and thus there is no contribution to the compatibility conditions in $\Ric(\fm,\fm)$. By the same argument there is no contribution to $\Ric(\fp,\fp)$. It also does not contribute to $\Ric(\fp,\fm)$ since we need to lower the degree by two. Thus we can ignore the term $-g( [Z,e_u],e_v)$ in our proof of Theorems A and B.

\providecommand{\bysame}{\leavevmode\hbox
	to3em{\hrulefill}\thinspace}

\end{document}